\documentclass[letterpaper,11pt]{article}

\usepackage{endnotes}
\usepackage{comment}
\usepackage{amssymb}
\usepackage{amsfonts}
\usepackage{amsmath}
\usepackage{amsthm}
\usepackage{graphicx,color,xcolor}
\definecolor{db}{rgb}{0.2, 0.46, 0.63}

\usepackage{comment}
\usepackage{array}

\usepackage{url}

\usepackage{setspace}

\usepackage{natbib}
\setcitestyle{numbers}
\setcitestyle{square}

\def\R{\mathbb{R}}

\def\sF{{\mathcal F}}

\def\sA{{\mathcal A}}

\def\E{\mathbb{E}}

\def\sF{\mathcal{F}}
\def\P{\mathbb{P}}

\def\sE{{\mathcal E}}
\def\N{\mathbb N}

\numberwithin{equation}{section}
\theoremstyle{plain}                
\newtheorem{theorem}{Theorem}[section]

\newtheorem{corollary}[theorem]{Corollary}
\theoremstyle{definition}           

\newtheorem{example}[theorem]{Example}

\newtheorem{assumption}[theorem]{Assumption}

\theoremstyle{remark}               

\usepackage[margin=1.25in]{geometry}

\newcommand{\Pp}{\mathbb{P}}
\newcommand{\dd}{\,\mathrm{d}}

\usepackage{amsmath,amssymb,amsthm,mathtools}
\usepackage{enumitem}
\usepackage{microtype}
\usepackage{color}
\usepackage[hidelinks]{hyperref}

\title{Strict SDE Comparison for Cusp Coefficients and Counterexamples}
\author{}
\date{\today }

\begin{document}
\begin{center}
\large{\bf Strict SDE Comparison for Cusp Coefficients and Counterexamples}\footnote{ Kasper Larsen is corresponding author and has contact information: Email: \url{KL756@math.rutgers.edu} and mailing address: Department of Mathematics, Rutgers University, Hill Center 330 - Busch Campus, 110 Frelinghuysen Road, Piscataway, NJ 08854-8019, USA. 
}

\ \\

{\large \bf Kasper Larsen}\\
Rutgers University

\ \\ 

\today

\ \\

\end{center}

\begin{abstract}
We provide a tractable sufficient condition for strict comparison for solutions of the one-dimensional stochastic
differential equation
\[
    \dd X_t^x=\sigma(X_t^x)\,\dd B_t, \quad X_0^x=x\in I=(\ell,r),
\]
for $\sigma>0$ and continuous. Our proof is based on a two-dimensional Lyapunov argument, which allows us to prove strict comparison for some coefficients in $W^{1,p}_{\text{loc}}(I)$, $1\le p<2$. We illustrate using $\sigma(x):=1+|x|^\beta$ for $x\in\R$, $\beta \in (0,1)$, and show that strict comparison holds if and
only if $\beta\in[\frac12,1)$.  We give examples showing that neither
\(\sigma\in W^{1,p}_{\mathrm{loc}}(\mathbb R)\), \(p\in(1,2)\),
nor
\(\sigma\in C^\beta(\mathbb R)\), \(\beta\in[\frac12,1)\),
is sufficient for strict comparison, even when combined with
boundedness, uniform ellipticity, global strong existence, and
pathwise uniqueness.
\end{abstract}

\section{Introduction} Let \(I=(\ell,r)\) be an open interval and consider the one-dimensional SDE
\begin{equation}\label{eq:intro-sde}
    \dd X_t^x=\sigma(X_t^x)\,\dd B_t,
    \qquad X_0^x=x\in I.
\end{equation}
Here $\sigma:I\to(0,\infty)$ is continuous, and we assume strong
existence and pathwise uniqueness up to the lifetime $\xi^x$ in $I$.
In particular, $\sigma$ is bounded away from zero on each compact
subinterval of $I$. For $x<y$, comparison asserts $X_t^x\le X_t^y$
up to the minimum of their lifetimes. Strict comparison asks that
\[
    \P\!\left(X_t^x<X_t^y\text{ for all }0\le t<\xi^x\wedge\xi^y\right)=1.
\]
Strict comparison is also called strong comparison, nonconfluence, and noncoalescence. 

Pathwise uniqueness ensures that, should two solutions \(X^x\) and \(X^y\) with \(x<y\) meet, they cannot later separate. Under the conditions described below, local-time methods yield pathwise uniqueness. Then, continuity and pathwise uniqueness imply comparison but not strict comparison in general. Yamada and Watanabe (1971) ensure pathwise uniqueness whenever, locally,
\begin{align}\label{LeGall1}
    |\sigma(x)-\sigma(y)|
    \le \rho(|x-y|),
\quad     \int_{0+}\frac{\dd u}{\rho(u)^2}=\infty,
\end{align}
where the modulus function $\rho:[0,\infty)\to[0,\infty)$ is increasing with $\rho(0)=0$ and $\rho(u)>0$ for all $u>0$. Nakao (1972, 1983) and Le Gall (1983) give pathwise-uniqueness criteria. The following sufficient condition is from Le Gall (1983).  For each compact
interval $J\subset I$, it suffices there exists  an increasing function
$f_J:J\to\R$ such that ($f_J$ can be discontinuous)
\begin{equation}\label{NakaoLeGall}
    |\sigma(x)-\sigma(y)|^2
    \le
    |f_J(x)-f_J(y)|,
    \qquad x,y\in J,\quad J\subset I \text{ compact.}
\end{equation}
In particular, all coefficients $\sigma \in C_\text{loc}^\beta(I)$ with  
$\beta\ge\frac12$ as well as all $\sigma \in W^{1,2}_{\mathrm{loc}}(I)
$ satisfy \eqref{NakaoLeGall}  and so pathwise uniqueness and comparison hold.
However, for $\sigma \in C_\text{loc}^\beta(I)$ with $\beta \in (0,\frac12)$, the condition in  \eqref{NakaoLeGall}  can fail.\footnote{This failure is not if and only if. In Section \ref{sec:counterexample}, we consider $\sigma_{\text{sym}}(x):=1+(|x|\land 2)^\beta$ for $x\in\R$ and $\beta \in (0,\frac12)$. This function belongs to $C_\text{loc}^\beta(\R)$ and it satisfies \eqref{NakaoLeGall}.}

Next, we discuss the literature related to strict comparison and homeomorphic stochastic flow properties. On one hand, an example like $\sigma(x) := \sqrt{|x|}$ for $x\in \R$ satisfies \eqref{LeGall1} but fails strict comparison because, eventually, all paths get absorbed at zero. On the other hand, several sufficient conditions for strict comparison are known. Theorem~1.1 in Yamada and Ogura (1981) gives strict comparison under the modulus condition
\begin{align}\label{YOcond1}
    \int_{0+}\frac{u\,\dd u}{\rho(u)^2}=\infty.
\end{align}
The local-Lipschitz result in Remark~2.1 in Yamada and Ogura (1981) implies strict comparison for $\sigma\in\operatorname{Lip}_{\mathrm{loc}}(I)$. This includes $\sigma(x):=\sqrt{x}$ on $I=(0,\infty)$.  While \eqref{YOcond1} does not cover all
$W^{1,2}_{\mathrm{loc}}(I)$ coefficients --- for example,
$\sigma(x):=1+|x|^\beta$ with $\beta\in(\frac12,1)$ ---
Theorem~2.1 of Yamada (1986) relaxes the \eqref{YOcond1} through a factorization 
\begin{align}\label{Yamada1986}
    \sigma(x)=\sigma_1(t,x)\sigma_2(x),
\end{align}
among whose hypotheses are that $\sigma_1$ satisfies  \eqref{YOcond1} and
$\sigma_2$ is absolutely continuous with $\sigma_2'\in L^2$.
In particular, after localization, 
Yamada (1986) covers all positive coefficients in
$W^{1,2}_{\mathrm{loc}}(I)$.\footnote{In Appendix \ref{outsideW1p}, the coefficient $\sigma_{\mathrm{lac}}$ defined in \eqref{eq:example-lacunary} satisfies \eqref{YOcond1}, and hence is
covered by Yamada (1986), but $\sigma_{\mathrm{lac}}$ does not belong to
$W^{1,2}_{\mathrm{loc}}(\R)$. }  For the SDE \eqref{eq:intro-sde}, the sufficient condition in  Ouknine and Rutkowski (1990)  is  equivalent to $\sigma\in W^{1,2}_\text{loc}$ and is therefore covered by Yamada (1986).\footnote{This equivalence with $\sigma \in W^{1,2}_\text{loc}$ is a consequence of Remark~3.4 in Ouknine and Rutkowski (1990). }  The coefficient  $\sigma_\text{sym}(x):=1+ \sqrt{|x|}$ for $x\in \R$ is in $C^{\frac12}_\text{loc}(\R)$ but $\sigma_\text{sym} \notin W^{1,2}_\text{loc}(\R)$.  While $\sigma_\text{sym}$ satisfies both \eqref{LeGall1} and \eqref{NakaoLeGall}, it fails both \eqref{YOcond1} and the conditions in Yamada (1986). Because Yamada (1986) covers strictly more than coefficients in  $W^{1,2}_{\mathrm{loc}}(I)$, we prove that $\sigma_\text{sym}$ is not covered by Yamada (1986) in Appendix \ref{Yamada_fails} (this observation is consistent with Remark 3.2 in Yamada, 1986).  Nevertheless, Example \ref{ex:sqrt-3} shows that strict comparison holds for  $\sigma_\text{sym}$.

Theorem~C in Fang and Zhang (2005) gives a multidimensional
non-confluence theorem, however, in our setting \eqref{eq:intro-sde},
their modulus conditions do not relax \eqref{YOcond1}. A similar observation applies to Theorem~1.9 in Lan and Wu (2014).   After specializing to \eqref{eq:intro-sde}, Ren and Zhang (2024) assume boundedness, uniform continuity, and uniform nondegeneracy, and impose two alternative assumptions on $\sigma$ in their Theorem~5.1. Their first option is \eqref{YOcond1} together with an additional mild restriction on
$u/\rho(u)$ as $u\downarrow 0$.   Their second option is the Ouknine–Rutkowski condition, which in our setting \eqref{eq:intro-sde} is equivalent to $\sigma\in W^{1,2}_\text{loc}$.  Thus, for the driftless
one-dimensional  \eqref{eq:intro-sde}, none of these results enlarges the class already covered by Yamada (1986). 

Zhang (2005), with the correction in Zhang (2006), studies homeomorphic
stochastic flows under a log-Lipschitz-type modulus. In our setting \eqref{eq:intro-sde}, the diffusion assumption in Theorem~4.1 of Zhang (2005) implies \eqref{YOcond1}. The homeomorphism conclusions of He and Zhang (2007) use the same stronger log-type modulus, which ---  in our setting \eqref{eq:intro-sde} --- implies \eqref{YOcond1}.  For comparison, Theorem 7 in Flandoli, Gubinelli and Priola (2010) establishes a
stochastic flow of diffeomorphisms when $\sigma\in C_b^3$ is uniformly nondegenerate. Their conclusion gives a $C^1$ stochastic flow of diffeomorphisms and strict comparison follows.

Zhang (2011) proves strong well-posedness and a stochastic
homeomorphism-flow property for SDEs with singular drift and
Sobolev diffusion coefficients. In dimension one, Theorem~1.1
of Zhang (2011) allows $1<p<2$. However, in the proof, the
Krylov estimate is applied to squared diffusion-gradient
quantities and, for $1<p<2$, the spatial exponent $\frac p2$
lies outside the range for which the Krylov estimate holds.  This concern does not apply to Zhang (2016), who develops
differentiability and stability properties, nor to Xia, Xie, Zhang
and Zhao (2020), who develop related well-posedness and
stochastic-flow results with diffusion-gradient exponents in
$[2,\infty)$.  Fedrizzi and Flandoli (2011, 2013) study
singular drifts with additive noise, proving pathwise uniqueness
and stochastic-flow regularity under assumptions with $p\ge2$.
Thus, unlike our zero-drift setting, the latter papers place the
low regularity in the drift rather than in the diffusion
coefficient. In one dimension, diffusion regularity can be transferred to drift regularity (and vice versa) using the Lamperti transformation $Y_t:=F(X_t)$ where $F'=\frac1\sigma$. Formally, we have 
\begin{align}\label{Lamperti_Dynamics}
    \dd Y_t=b(Y_t)\,\dd t+\dd B_t,
    \qquad
    b(y)
    :=
    -\frac12\sigma'\bigl(F^{-1}(y)\bigr).
\end{align}
When $\sigma$ is bounded above and bounded away from zero on a compact interval $J$, we have $
     b\in L^p(F(J))$ if and only if $ \sigma'\in L^p(J)$.  Therefore, in our setting, the $p\ge 2$ regularity in Fedrizzi and Flandoli (2011, 2013) corresponds to at least square integrability of $\sigma'$. In contrast, Theorem~5.1 of Ren and Zhang (2024) allows \(1<p<2\) and our cusp counterexample can be transferred by the Lamperti transformation to an additive-noise SDE satisfying Theorem~5.1's LPS assumptions to contradict their stated non-confluence conclusion.

Our motivation to write this paper was to understand better the grey area for
coefficients $\sigma\in W^{1,p}_{\mathrm{loc}}(\R)$ with
$p\in[1,2)$. In this range, $\lvert\sigma'\rvert^2 \in L^{\frac p2}_{\mathrm{loc}}$ and so the relevant Krylov estimates are
unavailable. Our results are:

\begin{enumerate}

\item In Section 2, our positive results provide new sufficient conditions for
strict comparison. In particular, they apply to some coefficients that belong to
$W^{1,p}_{\mathrm{loc}}(\R)$ for every $p\in(1,2)$ but not to
$W^{1,2}_{\mathrm{loc}}(\R)$, including
\begin{align}\label{ex_main}
    \sigma_\text{sym}(x):=1+\sqrt{|x|},
    \qquad
    \sigma_+(x):=1+\sqrt{x^+},
    \qquad x\in\R.
\end{align}

\item In Section \ref{sec:counterexample}, for each $p\in(1,2)$, we
construct a bounded, uniformly positive coefficient
$\sigma\in W^{1,p}_{\mathrm{loc}}(\mathbb R)$ for which global strong
existence and pathwise uniqueness hold  but strict comparison
fails. This contradicts the homeomorphism conclusion of
Theorem~1.1 of Zhang (2011) in the range $1<p<2$ and, after the
Lamperti transformation, the non-confluence conclusion of
Theorem~5.1 of Ren and Zhang (2024) in the range $1<p<2$. Our example
does not contradict these results outside the range $1<p<2$, nor
does it contradict any of the other papers discussed above.

\item In
Section~\ref{sec:critical-holder-counterexample}, for each $\beta\in(\frac12,1)$, we construct a bounded,
uniformly positive coefficient $\sigma\in C^{\beta}(\R)$ for which global
strong existence and pathwise uniqueness hold while strict comparison
fails. Consequently, $\sigma\in C^{\frac12}(\R)$ neither implies nor precludes strict comparison. 

\end{enumerate}

Finally, let us mention  Barlow (1982) and Barlow, Burdzy, Kaspi, and Mandelbaum (2001). Barlow (1982) constructs continuous, uniformly positive diffusion coefficients for which no
strong solution exists. In contrast, all our counterexamples have global strong and pathwise unique solutions. Barlow, Burdzy, Kaspi, and Mandelbaum (2001) construct a
non-degenerate coalescing example based on skew Brownian motion. By a scale transformation for skew Brownian motion
(see, e.g., Exercise~X.2.24 in Revuz and Yor, 1999),
this can equivalently be represented as an SDE with a discontinuous,
uniformly non-degenerate diffusion coefficient.
 In contrast, all our counterexamples have continuous
diffusion coefficients.

\section{Main positive result}\label{sec2}

\subsection{Setup and motivating examples}
On our probability space $(\Omega, \sF,\P)$, we assume $B = (B_t)_{t\ge0}$ is a one-dimensional standard Brownian motion. 
Throughout, we use the standard augmented Brownian filtration denoted by $\mathcal F_t$ for $t\ge0$, which satisfies the usual conditions.  All adapted processes and stopping times below are with respect to this filtration.  

For a solution of the SDE \eqref{eq:intro-sde} started from \(x\in I\), we let
\(\xi^x\) denote its maximal lifetime in the open interval \(I\).   If a
boundary extension --- such as absorption  ---  is imposed after the path leaves \(I\), that continuation is not
included in \(\xi^x\). This distinction is illustrated in the examples below.

\begin{assumption}\label{ass1}
Let \(I=(\ell,r)\subseteq\mathbb R\) be an open interval and let
\(\sigma:I\to(0,\infty)\) be continuous. Assume that, for all
\(x\in I\), the SDE~\eqref{eq:intro-sde} admits a strong solution up to
 \(\xi^x\), and that pathwise uniqueness holds
 up to  \(\xi^x\).
\hfill\(\diamondsuit\)
\end{assumption}
\noindent This assumption holds, e.g., for $\sigma \in C^{\frac12}_{\text{loc}}(I)$ because \eqref{LeGall1}  ensures pathwise uniqueness of the SDE \eqref{eq:intro-sde}. Local weak existence follows from the Engelbert--Schmidt conditions. The Yamada--Watanabe principle then yields strong existence.

\begin{example} \label{ex:1}
Consider \(I:=(0,\infty)\) and the Feller coefficient
\[
    \sigma(x):=\sqrt{x},\quad x\in (0,\infty).
\]
Condition \eqref{LeGall1} gives pathwise uniqueness.  The degeneracy $\sigma(0) = 0$ happens  outside $I=(0,\infty)$ and so $\sigma\in\operatorname{Lip}_{\mathrm{loc}}(I)$. Remark 2.1 in Yamada and Ogura (1981) ensures strict comparison in the sense
\[
\P\!\left(
   X_t^y>X_t^x\text{ for all }0\le t<\xi^x\wedge\xi^y
\right)=1,
\qquad y>x>0.
\]
Proposition 5.5.22(b) in Karatzas and Shreve (1991) ensures that $\lim_{t\uparrow \xi^z}X_t^z =0$ a.s. Consequently, the lifetime $\xi^z$  in the open interval $(0,\infty)$ equals
\[
    \tau_z:=\inf\{t\geq0:X_t^z=0\},\quad z>0.
\]

Alternatively, we can extend the path after $\tau_z$ by absorption at zero.  For this extension,  we have $\Pp(\tau_z<\infty)=1$ for $z>0$ because
\[
    \Pp(\tau_z\leq t)=\Pp(X_t^z=0)
    =\exp\!\left(-\frac{2z}{t}\right)\to1
    \quad \text{as } \quad t\to\infty.
\]
Because $\xi^z = \tau_z$, this gives $\P(\xi^z<\infty)=1$. The extended process is global but because every path started from $z>0$ hits zero almost surely, strict comparison for the absorbed extensions on the entire time interval $[0,\infty)$ fails.

\hfill\(\diamondsuit\)

\end{example}

\begin{example} \label{ex:2}
Consider \(I:=\R\) and the coefficients
\[
    \sigma_\text{sym}(x):=\sqrt{|x|},\quad     \sigma_+(x):=\sqrt{x^+},\quad x\in \R.
\]
Because \(\sigma_\text{sym}(0)=\sigma_+(0)=0\), these examples do not satisfy Assumption~\ref{ass1}, but \eqref{LeGall1} holds and gives pathwise uniqueness. Moreover, they belong to $W^{1,p}_\text{loc}(\R)$ for $1\le p<2$ but not to $W^{1,2}_\text{loc}(\R)$. Because every positive starting solution eventually hits zero and gets absorbed there, strict comparison fails. 
\hfill\(\diamondsuit\)
\end{example}

\begin{example} \label{ex:sqrt-3}
Consider \(I:=\R\) and the coefficients \eqref{ex_main}. The  function $\sigma_\text{sym}$ belongs to $C^\frac12_{\text{loc}}(\R)$ and so \eqref{LeGall1} gives pathwise uniqueness and  Assumption~\ref{ass1} holds.  As in Example  \ref{ex:2},  $\sigma_\text{sym}$ only belongs to $W^{1,p}_\text{loc}(\R)$ for $1\le p<2$ but not $W^{1,2}_\text{loc}(\R)$. For $\sigma_\text{sym}$,  \eqref{YOcond1} fails. Indeed, for all sufficiently small $h>0$, we have
\[
    |\sigma_\text{sym}(h)-\sigma_\text{sym}(0)|=\sqrt h,
\]
so every local modulus $\rho$ for $\sigma_\text{sym}$ must satisfy $\rho(h)\ge\sqrt h$. Hence, for sufficiently small $\varepsilon>0$,
\[
    \int_0^\varepsilon \frac{h\,\dd h}{\rho(h)^2}\le\varepsilon<\infty,
\]
and \eqref{YOcond1} fails. In Appendix \ref{Yamada_fails}, we show that \eqref{Yamada1986} cannot be applied either. Nevertheless, strict comparison holds for $\sigma_\text{sym}$ because of Theorem~\ref{thm:general-criterion} below. In Example~\ref{ex:sqrt-3B} below, we will verify the sufficient conditions of Theorem~\ref{thm:general-criterion}. The same analysis and conclusions hold for  $   \sigma_+$. 
\hfill\(\diamondsuit\)
\end{example}

\begin{example} \label{ex:new} For $\beta \in (0,\frac12)$, we define  following unbounded,
globally $\beta$-H\"older continuous functions
\begin{align}\label{eq:untruncated-beta}
\sigma_{+}(x)
    &:=1+(x^+)^\beta,\quad 
   \sigma_{\mathrm{sym}}(x)
    :=1+|x|^\beta,
    \quad x\in\mathbb R,
\end{align}
Because $\sigma_+'(x) = \beta x^{\beta-1}$ for $x>0$, we see $\sigma_+ \in W^{1,p}_\text{loc}(\R)$ for all $p\in [1,\frac{1}{1-\beta})$. To see that Assumption \ref{ass1} holds, we define
\[
    f(x):=(x^+)^{2\beta},\quad x\in \R.
\]
Then, for  $x\le y$, we have
\begin{align*}
    |\sigma_+(y)-\sigma_+(x)|^2=
    \left((y^+)^\beta-(x^+)^\beta\right)^2\le
    (y^+)^{2\beta}-(x^+)^{2\beta}=
    f(y)-f(x).
\end{align*}
Therefore, \eqref{NakaoLeGall} holds whereas \eqref{LeGall1} fails. In Section 3, we prove that strict comparison fails in the sense that
there exists $d_\beta>0$ such that the stopping times
\[
    \tau_d:=\inf\{t\ge0:X_t^{-\frac{d}2}=X_t^{\frac{d}2}\},\quad 0<d<d_\beta,
\]
satisfy $\Pp(\tau_d<\infty)>0$. The same analysis holds for $\sigma_{\text{sym}}$ defined in \eqref{eq:untruncated-beta}.
\hfill\(\diamondsuit\)

\end{example}

\subsection{Main result }
We define
\begin{align}
    \Sigma(M,D)
     &:=\frac{\sigma(M+\frac{D}2)+\sigma(M-\frac{D}2)}2,\quad M\in\R, \quad D\ge 0,
      \label{eq:Sigma-def}\\
    \Delta(M,D)
      &:=\sigma(M+\frac{D}2)-\sigma(M-\frac{D}2),\quad M\in\R, \quad D\ge 0,
      \label{eq:Delta-def}\\
    \Gamma(M,D)
      &:=\Sigma(M,D)-\frac MD\,\Delta(M,D),\quad M\in\R, \quad D> 0.
      \label{eq:Gamma-def}
\end{align}

\begin{theorem}\label{thm:general-criterion} Let Assumption~\ref{ass1} hold with $I:=\R$ and suppose that:
\begin{enumerate}[label=\textnormal{(\roman*)}]
    \item $\sigma$ is locally Lipschitz on \(\R\setminus\{0\}\).
\item There are $R>0$, a continuous nonnegative function
$Q:\mathbb{R}\to[0,\infty)$, and a constant $C_Q\in[0,\infty)$ such that
\begin{align}\label{eq:Q-decay}
\int_0^b Q(u)\,\dd u
\le C_Q\log(1+b),
\quad
\int_{-b}^0 Q(u)\,\dd u
\le C_Q\log(1+b),
\quad b>0,
\end{align}
and
\begin{align}\label{eq:direct-condition}
    \Delta(M,D)^2
    \le
    D\,Q\!\left(\frac{M}{D}\right)\Gamma(M,D)^2,
    \qquad |M|<R,\quad 0<D<R.
\end{align}

\end{enumerate}
Then, for all $x,y\in\R$ with $x<y$, we have
\begin{align}\label{strict0}
\P\!\left(X_t^x<X_t^y\text{ for all }t\ge0\right)=1.
\end{align}

\end{theorem}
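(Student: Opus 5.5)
The plan is a two-dimensional Lyapunov argument in the midpoint--gap coordinates
\[
M_t:=\tfrac12\bigl(X_t^x+X_t^y\bigr),\qquad D_t:=X_t^y-X_t^x .
\]
\textbf{Reduction.} By Assumption~\ref{ass1} and pathwise uniqueness, comparison already gives $D_t\ge0$. Once the paths meet they stay together, so \eqref{strict0} is equivalent to $\tau:=\inf\{t:D_t=0\}=\infty$ a.s. From \eqref{eq:intro-sde},
\[
\dd D_t=\Delta(M_t,D_t)\,\dd B_t,\qquad \dd M_t=\Sigma(M_t,D_t)\,\dd B_t .
\]

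\textbf{Localization.} Coalescence far from the origin is excluded by hypothesis~(i). Near any meeting point $z\ne0$, $\sigma$ is Lipschitz, so $|\Delta|\le LD$ and $\log D$ is a semimartingale with bounded characteristics. The standard Yamada--Ogura / Lipschitz argument (Remark~2.1 in Yamada and Ogura, 1981) then shows $D$ cannot reach $0$ while $M$ stays in a compact set avoiding $0$. It therefore suffices to rule out $D_\tau=0$ on the event that $(M,D)$ enters the box $\{|M|<R,\ 0<D<R\}$ and stays there until $\tau$. To do this I patch together successive excursions into and out of that box via stopping times.

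\textbf{Key computation in the box.} Use the ratio $Z:=M/D$. A direct It\^o computation gives
\[
\dd Z_t=\frac{\Gamma}{D}\,\dd B_t-\frac{\Delta\,\Gamma}{D^2}\,\dd t,
\qquad
\dd\log D_t=\frac{\Delta}{D}\,\dd B_t-\frac{\Delta^2}{2D^2}\,\dd t,
\]
with $\Gamma$ from \eqref{eq:Gamma-def}. This is where $\Gamma$ enters, and \eqref{eq:direct-condition} reads
\[
\frac{\Delta^2}{D^2}\;\le\; D\,Q(Z)\,\frac{\Gamma^2}{D^2}\;=\;D\,Q(Z)\,\frac{\dd\langle Z\rangle_t}{\dd t}.
\]
So the compensator that could drive $\log D$ to $-\infty$ is controlled by $Q(Z)$ integrated against the quadratic variation of $Z$. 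Via an occupation-time/local-time argument this is dominated by $\int Q(z)\,L^z_t(Z)\,\dd z$.

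\textbf{Lyapunov function.} I will take
\[
V(M,D):=-\log D+K(M/D),
\]
with $K$ solving $\tfrac12K''=-cQ$ (or a similar ODE in $Q$), and $c$ chosen so that the drift of $V(M_t,D_t)$ is bounded above on the box. The cross-variation and drift terms carry factors $\Delta\Gamma/D^2$. I bound them by Cauchy--Schwarz, $|\Delta\Gamma|/D^2\le \tfrac{\varepsilon}{2}\Delta^2/D^2+\tfrac1{2\varepsilon}\Gamma^2/D^2$, and again use \eqref{eq:direct-condition} together with $D<R$. Once the drift of $V$ is bounded above, $V(M_{t\wedge\cdot},D_{t\wedge\cdot})$ is a local supermartingale up to bounded drift. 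Optional stopping then yields $\P(\tau\le t)=0$ for every $t$, provided $V\to+\infty$ as $D\downarrow0$ inside the box.

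\textbf{Main obstacle.} The delicate step is exactly that last proviso: $K$ must not cancel the blow-up of $-\log D$. Inside the box $|Z|=|M|/D$ can be of order $1/D$. By the logarithmic bound \eqref{eq:Q-decay}, $K'(z)$ grows at most like $C_Q\log(1+|z|)$, which a priori allows $K(z)$ to grow like $|z|\log|z|$; that is too large. I would first try to exploit the freedom in the sign and normalization of $K$. Specifically, I would choose $K'(z)=\mp c\int_0^z Q$ so that $K\le0$ grows only like $-c\int_0^{|z|}\log(1+u)\,\dd u$ where it helps. If this fails, I would replace $K$ by a bounded function of $\int_0^Z Q$ and rely instead on a time change by $\langle Z\rangle$. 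Under that time change $Z$ is a Brownian motion, whose occupation density is integrable against $Q$ by \eqref{eq:Q-decay}; combined with $D<R$, this should show $\int_0^\tau \Delta^2/D^2\,\dd s<\infty$ a.s. It then follows that $\log D_\tau>-\infty$, that is, $D_\tau>0$.
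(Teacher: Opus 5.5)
Your reduction, the localization to the box, and the optional-stopping/Fatou endgame are the right skeleton. The gap is in the Lyapunov function: $V=-\log D+K(M/D)$ does not work, and this is the heart of the proof. Since $Z=M/D$ is not a local martingale, any $K$ depending on $Z$ alone picks up a first-order drift term:
\[
\text{drift of }V=\frac{\Delta^2}{2D^2}+\frac{K''(Z)\,\Gamma^2}{2D^2}-\frac{K'(Z)\,\Delta\,\Gamma}{D^2}.
\]
Your Cauchy--Schwarz step leaves $\Gamma^2/(2\varepsilon D^2)$ with no factor of $Q$, yet $\Gamma$ is of order one near the cusp; for instance $\Gamma\ge1$ for $\sigma_{\mathrm{sym}}=1+\sqrt{|x|}$. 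Using $|\Delta|\le\sqrt{DQ}\,\Gamma$ instead still leaves $K'(Z)^2\Gamma^2/(2cD)$.

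This is not an artifact of the estimates. Take $\sigma_{\mathrm{sym}}$ with $Q=h^2$, $K''=-2cQ$ and $K(0)=K'(0)=0$, fix a small $M>0$ and let $D\downarrow0$. Then the drift is of order $c\log(M/D)/(\sqrt M\,D)\to+\infty$.

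Independently, as you observed, $|K(Z)|$ with $|Z|\sim1/D$ can be of order $D^{-1}\log(1/D)$, which swamps $-\log D$. Flipping the sign of $K$ does not help, since it flips the sign of the only good term $K''\Gamma^2/(2D^2)$.

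The fallback does not close the gap either. After the time change by $\langle Z\rangle$, $Z$ is a Brownian motion plus the drift $-\Delta\Gamma/D^2$, not a Brownian motion. Moreover, $Q$ is only log-integrable and $\langle Z\rangle_\tau$ is typically infinite. So there is no reason for $\int Q(z)L^z_\tau(Z)\,\dd z$ to be finite, and the proposed bound on $\int_0^\tau\Delta^2/D^2\,\dd s$ is not established.

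The missing idea is to make the correction $1$-homogeneous: take $H(M,D)=-\log D+D\varphi(M/D)$ with $\varphi''=-Q$ and $\varphi(0)=\varphi'(0)=0$. Because $M$ and $D$ are local martingales, only second derivatives enter the drift. The Hessian of $D\varphi(M/D)$ is $\frac{\varphi''(Z)}{D}(1,-Z)^{\top}(1,-Z)$, which sees only $\Sigma-Z\Delta=\Gamma$. Hence the drift of $H(M_t,D_t)$ is exactly $\frac{\Delta^2}{2D^2}-\frac{Q(Z)\Gamma^2}{2D}$, with no cross term. It is nonpositive by \eqref{eq:direct-condition}, whose factor $D$ matches this scaling.

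The same factor $D$ cures the growth problem. For $|M|\le\rho\le1$ and $0<D\le\rho$,
\[
D|\varphi(M/D)|\le C_Q|M|\log(1+|M|/D)\le C_Q\rho\log(2/D).
\]
So with $C_Q\rho\le\frac12$ one gets $H\ge\frac12\log\frac1D-\frac12\log2$ on the box. Your localization plus optional stopping and Fatou then gives $\P(D_\tau=0,\ M_\tau=0,\ \tau<\infty)=0$.
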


\begin{proof} Since \(X^z\) in \eqref{eq:intro-sde} is a continuous local martingale, its scale function $p$ is affine with 
$p(-\infty)=-\infty$ and $p(+\infty)=+\infty$. Proposition~5.5.22(a) in Karatzas and Shreve (1991)
gives $\P(\xi^z=\infty)=1$ for all $z\in \R$. Fix $y>x$ and set
\begin{align}\label{MandD}
    M_t:=\frac{X_t^x+X_t^y}{2},
    \qquad
    D_t:=X_t^y-X_t^x.
\end{align}
We define the first meeting time (stopping time)
\begin{align}\label{firstmeetingtime}
    \tau:=\inf\{t\ge0:D_t=0\}.
\end{align}
Since $D_0=y-x>0$ and $D$ is continuous, $\tau>0$ almost surely. Path continuity gives $D_t>0$ on the stochastic interval  $t\in [0,\tau)$.  
\medskip

\noindent {\bf Step 1/4:} This step creates a lower bound for the Lyapunov function. For $0<t<\tau$, we have the dynamics
\begin{align}\label{dMdD}
\begin{split}
    \dd M_t
    &=\frac{\sigma(M_t+\frac{D_t}2)+\sigma(M_t-\frac{D_t}2)}2\,\dd B_t=\Sigma(M_t,D_t)\dd B_t, \\
    \dd D_t
    &=\big(\sigma(M_t+\tfrac{D_t}2)-\sigma(M_t-\tfrac{D_t}2)\big)\dd B_t = \Delta(M_t,D_t)\dd B_t.
\end{split}
\end{align}
We define $\varphi :\R \to\R$ by
\begin{equation}\label{eq:phi-streamlined}
    \varphi(z):=-\int_0^z (z-u)Q(u)\,\dd u,\quad z\in \R.
\end{equation}
Then, $\varphi \in C^2(\R)$ and satisfies
\[
    \varphi(0)=\varphi'(0)=0,
    \qquad
    \varphi''(z)=-Q(z)\le0.
\]
These properties ensure $\varphi$ is concave and $\varphi \le 0$. Our proof is based on the Lyapunov function
\begin{equation}\label{eq:H-streamlined}
    H(M,D):=-\log D+D\varphi\!\left(\frac MD\right),
\quad M\in \R,   \quad D>0,
\end{equation}
where we  note that $H(M,0)$ is undefined. For $ M\in \R$ and $ D>0$, differentiation of \eqref{eq:H-streamlined} gives
\begin{align*}
    H_{MM}(M,D)
      &=\frac1D\varphi''\!\left(\frac MD\right)
       =-\frac1D Q\!\left(\frac MD\right),\\
    H_{MD}(M,D)
      &=-\frac{M}{D^2}\varphi''\!\left(\frac MD\right)
       =\frac{M}{D^2}Q\!\left(\frac MD\right),\\
    H_{DD}(M,D)
      &=\frac1{D^2}+\frac{M^2}{D^3}\varphi''\!\left(\frac MD\right)
       =\frac1{D^2}-\frac{M^2}{D^3}Q\!\left(\frac MD\right).
\end{align*} 
 It\^o's formula produces the following drift of $H(M_t,D_t)$ 
\begin{align}\label{eq:H-drift-streamlined}
\frac{\Delta(M_t,D_t)^2}{2D_t^2}
-
\frac{Q(\frac {M_t}{D_t})}{2D_t}\Gamma(M_t,D_t)^2,\quad t\in [0,\tau).
\end{align}
Condition~\eqref{eq:direct-condition} implies that \eqref{eq:H-drift-streamlined} is nonpositive whenever
$|M_t|<R$ and $0<D_t<R$.  Thus, up to stopping inside $(-R,R)\times(0,R)$, the process $H(M_t,D_t)$ is a local supermartingale.

Next, we prove a lower bound for $H$. From \eqref{eq:Q-decay} and \eqref{eq:phi-streamlined}, we have
\[
\varphi(z)
\ge
-|z|\int_{\min(0,z)}^{\max(0,z)}Q(u)\,\dd u
\ge
-C_Q|z|\log(1+|z|),
\qquad z\in\R.
\]
Choose $\rho>0$ such that
\begin{equation}\label{eq:rho-streamlined}
    \rho<R,
    \qquad
    \rho\le1,
    \qquad
    C_Q\rho\le\frac12,
\end{equation}
where the third condition is unnecessary when $C_Q=0$.  For $|M|\le\rho$ and $0<D\le\rho$, we have
\begin{align*}
    H(M,D)
    &\ge \log\frac1D
       -C_Q|M|\log\!\left(1+\frac{|M|}{D}\right)\\
    &\ge \log\frac1D
       -C_Q|M|\log\frac2D\\
    &=\left(1-C_Q|M|\right)\log\frac1D-C_Q|M|\log2\\
    &\ge \frac12\log\frac1D-\frac12\log2,
\end{align*}
where the second inequality uses $D+|M| \le 2\rho \le2$. Consequently,
\begin{equation}\label{eq:H-blowup-streamlined}
   H(M,D)+\frac12\log2\ge -\frac12\log D,\quad |M|\le \rho,\quad 0<D\le \rho.
\end{equation}

\noindent {\bf Step 2/4:} This step provides the upper and lower bounds needed in the next step. We fix deterministic times $0\le q<T<\infty$, an integer $K\in\N$, and define the $\mathcal F_q$-measurable set
\begin{equation}\label{eq:E-streamlined}
    \mathcal E
    :=\big\{|M_q|<\rho,\quad 0<D_q<\rho,\quad H(M_q,D_q)\le K\big\},
\end{equation}
where $\rho>0$ satisfies \eqref{eq:rho-streamlined}. For $0<\varepsilon<\rho$, we define the bounded stopping times
\begin{equation}\label{eq:theta-streamlined}
    \theta_\varepsilon
    :=
      \inf\big\{t\ge q:
      |M_t|\ge\rho\ \text{or}\ D_t\le\varepsilon\ \text{or}\ D_t\ge\rho
      \big\}\land T\in [q,T].
\end{equation}
For $\omega \in \mathcal E\cap\{D_q>\varepsilon\}$, we have $q<\theta_\varepsilon(\omega)$ and so
\begin{align}\label{Fatou1}
    |M_t(\omega)|\le\rho,
    \qquad
    \varepsilon\le D_t(\omega)\le\rho,\quad t\in [q,\theta_\varepsilon(\omega)].
\end{align}
On the compact region $M\in[-\rho,\rho]$ and
$D\in[\varepsilon,\rho]$, the function $H$ and all its first and
second derivatives are bounded.  Moreover, $M\pm \frac{D}2$ remain in a
fixed compact interval.  By continuity of $\sigma$, both diffusion
coefficients $\Sigma(M,D)$ and $\Delta(M,D)$ in \eqref{dMdD} are bounded on this region. Since \(\mathcal E\cap\{D_q>\varepsilon\}\in\mathcal F_q\), we can multiply by this set's indicator to see that 
the stochastic integral produced by applying It\^o's
formula from
$q$ to $\theta_\varepsilon$ to compute the dynamics of 
$H(M_{t\land\theta_\varepsilon},D_{t\land\theta_\varepsilon})$ 
is a square integrable martingale with zero expectation.  Since the drift in
\eqref{eq:H-drift-streamlined} is nonpositive, we take expectations 
to get
\begin{align}\label{eq:stopped-expectation-streamlined}
    \E\left[
       \mathbf 1_{\mathcal E\cap\{D_q>\varepsilon\}}
       H(M_{\theta_\varepsilon},D_{\theta_\varepsilon})
    \right]
    \le
    \E\left[
       \mathbf 1_{\mathcal E\cap\{D_q>\varepsilon\}}H(M_q,D_q)
    \right].
\end{align}
On  $\mathcal E\cap\{D_q\le\varepsilon\}$, we
have $\theta_\varepsilon=q$, and therefore
\begin{align}\label{eq:stopped-expectation-streamlined0}
    \E\left[
       \mathbf 1_{\mathcal E\cap\{D_q\le\varepsilon\}}
       H(M_{\theta_\varepsilon},D_{\theta_\varepsilon})
    \right]
    =
    \E\left[
       \mathbf 1_{\mathcal E\cap\{D_q\le\varepsilon\}}H(M_q,D_q)
    \right].
\end{align}
Adding \eqref{eq:stopped-expectation-streamlined} and
\eqref{eq:stopped-expectation-streamlined0} yields
\begin{equation}\label{eq:stopped-expectation-streamlined2}
    \E\left[
       \mathbf 1_{\mathcal E}
       H(M_{\theta_\varepsilon},D_{\theta_\varepsilon})
    \right]
    \le
    \E\left[
       \mathbf 1_{\mathcal E}H(M_q,D_q)
    \right]
    \le K\P(\mathcal E).
\end{equation}

On $\mathcal E\cap\{D_q>\varepsilon\}$, we use  \eqref{Fatou1} for $t=\theta_\varepsilon$ to see
$\varepsilon\le D_{\theta_\varepsilon}\le\rho$. On
$\mathcal E\cap\{D_q\le\varepsilon\}$, we have
$\theta_\varepsilon=q$ and hence
$0<D_{\theta_\varepsilon}=D_q\le\varepsilon$.  All in all, $D_{\theta_\varepsilon}>0$ on 
$\mathcal E$ and the lower bound in \eqref{eq:H-blowup-streamlined} is applicable and gives
\begin{equation}\label{eq:uniform-lower-fatou}
    \mathbf 1_{\mathcal E}
    \left(
       H(M_{\theta_\varepsilon},D_{\theta_\varepsilon})
       +\frac12\log2
    \right)
    \ge  -\frac12\log (D_{\theta_\varepsilon})\ge0,\qquad 0<\varepsilon<\rho.
\end{equation}
The inequality in \eqref{eq:stopped-expectation-streamlined2} gives the uniform expectation
bound
\begin{equation}\label{eq:uniform-expectation-fatou}
\E\left[
    \mathbf 1_{\mathcal E}
    \left(
       H(M_{\theta_\varepsilon},D_{\theta_\varepsilon})
       +\frac12\log2
    \right)
\right]
\le
\left(K+\frac12\log2\right)\P(\mathcal E),
\qquad 0<\varepsilon<\rho.
\end{equation}

\noindent{\bf Step 3/4:} This step rules out $X^x$ and $X^y$ meeting at the cusp point 0. We use $\tau$ from \eqref{firstmeetingtime} to define the set
\begin{align}\label{setA}
    \mathcal A
    :=\mathcal E\cap
    \left\{
      q<\tau\le T,\quad
      |M_t|<\rho,\quad 0<D_t<\rho
      \text{ for all }q\le t<\tau
    \right\}.
\end{align}
Let $\varepsilon_n\downarrow0$ with $0<\varepsilon_n<\rho$.
By \eqref{eq:uniform-lower-fatou} and
$\mathcal A\subseteq\mathcal E$, for all $n$, we have
\[
    \mathbf 1_{\mathcal A}
    \left(
       H(M_{\theta_{\varepsilon_n}},D_{\theta_{\varepsilon_n}})
       +\frac12\log2
    \right)
    \ge0.
\]
This uniform lower bound allows us to use Fatou's lemma below.

Fix $\omega\in\mathcal A$ so in particular $\tau(\omega) \le T <\infty$.  Since $D_q(\omega)>0$, there exists an
$\omega$-dependent integer $N(\omega)$ such that
$\varepsilon_n<D_q(\omega)$ for all $n\ge N(\omega)$.  The path
$t\mapsto D_t(\omega)$ is continuous, and is strictly positive on
$[q,\tau(\omega))$, and satisfies $D_{\tau}(\omega)=0$.
Consequently, for all $n\ge N(\omega)$, the path $D(\omega)$ hits 
$\varepsilon_n$ strictly before $\tau(\omega)$.  By the definition of
$\mathcal A$,
\[
    |M_t(\omega)|<\rho
    \qquad\text{and}\qquad 0<
    D_t(\omega)<\rho,
    \qquad q\le t<\tau(\omega),
\]
so neither of the stopping conditions
$|M_t|\ge\rho$ or $D_t\ge\rho$ can occur before $\theta_{\varepsilon_n}(\omega)$.
Since $\theta_{\varepsilon_n}(\omega)<\tau(\omega)\le T$, the
truncation at $T$ cannot occur first either.  Therefore, for all
$n\ge N(\omega)$, $\theta_{\varepsilon_n}(\omega)$ is the
first time after $q$ at which $D(\omega)$ hits $\varepsilon_n$.  Hence, eventually, we have
\[
    D_{\theta_{\varepsilon_n}}=\varepsilon_n,
    \qquad
    |M_{\theta_{\varepsilon_n}}|<\rho
    \qquad\text{on }\mathcal A.
\]

The lower bound \eqref{eq:H-blowup-streamlined} gives, on $\mathcal A$,
\[
    H(M_{\theta_{\varepsilon_n}},D_{\theta_{\varepsilon_n}})
    +\frac12\log2
    \ge
    \frac12\log\frac1{\varepsilon_n}
    \longrightarrow \infty\quad \text{as} \quad n\to\infty.
\]
The lower bound \eqref{eq:uniform-lower-fatou} allows us to use Fatou's lemma to see
\begin{align*}
&\E\Bigg[
\mathbf 1_{\mathcal A}
\liminf_{n\to\infty}
\left(
H(M_{\theta_{\varepsilon_n}},D_{\theta_{\varepsilon_n}})
+\frac12\log2
\right)
\Bigg]
\\
&\qquad\le
\liminf_{n\to\infty}
\E\left[
\mathbf 1_{\mathcal A}
\left(
H(M_{\theta_{\varepsilon_n}},D_{\theta_{\varepsilon_n}})
+\frac12\log2
\right)
\right]
\\
&\qquad\le
\liminf_{n\to\infty}
\E\left[
\mathbf 1_{\mathcal E}
\left(
H(M_{\theta_{\varepsilon_n}},D_{\theta_{\varepsilon_n}})
+\frac12\log2
\right)
\right]
\\
&\qquad\le
\left(K+\frac12\log2\right)\P(\mathcal E)<\infty,
\end{align*}
where the last inequality uses \eqref{eq:uniform-expectation-fatou}. Because the random variable inside the first expectation is $+\infty$ on
$\mathcal A$, we get $\P(\mathcal A)=0$.

On the set $(X_\tau^x=X_\tau^y=0,\ \tau<\infty)$, we have
$M_\tau=0$ and $D_\tau=0$.  By continuity, for each sample path in
this set, there exists $\delta (\omega)=\delta\in(0,\tau)$ such
that
\[
    |M_t|<\rho
    \qquad\text{and}\qquad
    0<D_t<\rho,
    \qquad t\in(\tau-\delta,\tau).
\]
Hence, for each such sample path, there exists a rational number
$q\in\mathbb Q\cap(\tau-\delta,\tau)$ and an integer $T$ such that
$\tau\le T$.  Since $q<\tau$, we have $D_q>0$, and therefore
$H(M_q,D_q)$ is finite.  Thus, there exists an integer $K$ such that
$H(M_q,D_q)\le K$.  Consequently,
\[
    (X_\tau^x=X_\tau^y=0,\ \tau<\infty)
    \subseteq
    \bigcup_{q\in\mathbb Q_+}
    \bigcup_{T\in\mathbb N}
    \bigcup_{K\in\mathbb N}
    \mathcal A_{q,T,K},
\]
where $\mathcal A_{q,T,K}$ denotes the set $\mathcal A$ defined in \eqref{setA} 
with the corresponding choices of $q,T$, and $K$.  Each
$\mathcal A_{q,T,K}$ has probability zero by the preceding argument,
and the union is countable.  Hence,
\[
    \P(X_\tau^x=X_\tau^y=0,\ \tau<\infty)=0.
\]

\noindent {\bf Step 4/4:} It remains to exclude $X^x$ and $X^y$ meeting away from the cusp 0. Because $\sigma$ is locally Lipschitz away from zero, the argument is standard and is omitted.  \end{proof}

%
%
%

The pointwise condition
\begin{align}\label{pointcond}
    Q(s)\le \frac{C_Q}{1+|s|},
    \qquad s\in\mathbb{R},
\end{align}
implies \eqref{eq:Q-decay}. Indeed, for $z\ge0$, we have
\[
\int_0^z Q(u)\,\dd u
\le
C_Q\int_0^z\frac{\dd u}{1+u}
=
C_Q\log(1+z).
\]
For $z\le0$, the substitution $v=-u$ gives the bound.

\begin{example}\label{ex:sqrt-3B}\label{ex:sqrt-positive-part}
(Example~\ref{ex:sqrt-3} continued) Consider the two coefficients
$\sigma_+$ and $\sigma_{\mathrm{sym}}$ defined in \eqref{ex_main}. Both $\sigma_+$ and $\sigma_{\mathrm{sym}}$ satisfy Assumption~\ref{ass1} and are locally Lipschitz on
\(\mathbb R\setminus\{0\}\). In the following, we will verify the sufficient conditions of
Theorem~\ref{thm:general-criterion} for
\(\sigma\in\{\sigma_+,\sigma_{\mathrm{sym}}\}\) simultaneously. For \(z\in\mathbb R\), we define \(A(z)\) and \(B(z)\) by
\[
\begin{array}{c|cc}
 & A(z) & B(z) \\ \hline
\sigma=\sigma_+
 & \sqrt{(z+\frac12)^+} & \sqrt{(z-\frac12)^+} \\[1mm]
\sigma=\sigma_{\mathrm{sym}}
 & \sqrt{|z+\frac12|} & \sqrt{|z-\frac12|}
\end{array}
\]
and we define
\begin{align}\label{eq:sqrt-h-g}
    h(z)&:=A(z)-B(z),
    &
    g(z)&:=\frac{A(z)+B(z)}2-z h(z),
    &
    Q(z)&:=h(z)^2.
\end{align}
The function \(Q\) is continuous and nonnegative. Because
\[
    |\sqrt a-\sqrt b|^2\le |a-b|,
    \qquad a,b\ge0,
\]
and both $x^+$ and $|x|$ are 
Lipschitz with constant 1, we have
\[
Q(z)= h(z)^2 \le1,\quad z\in\R,   \qquad Q(z) \le\frac{2}{1+|z|},    \quad |z|\le1.
\]
For \(z\ge1\), the functions $A(z)$ and $B(z)$ are the same for both $\sigma_+$ and $\sigma_{\mathrm{sym}}$ and produce
\[
    h(z)
    =
    \sqrt{z+\frac12}-\sqrt{z-\frac12}
    =
    \frac{1}{
       \sqrt{z+\frac12}+\sqrt{z-\frac12}},\quad z\ge 1.
\]
Therefore, we have
\[
    Q(z) = h(z)^2
    \le
    \frac{1}{z+\frac12}
    \le
    \frac{2}{1+z},\quad z\ge1.
\]
For \(\sigma=\sigma_+\), we have \(Q(z)=0\) when \(z\le-1\),
whereas for \(\sigma=\sigma_{\mathrm{sym}}\), the function \(Q\) is
even. Consequently, in both cases,  the stronger pointwise condition~\eqref{pointcond} holds with
\(C_Q=2\).

Next, for \(M\in\mathbb R\) and \(D>0\), and set $z:=\frac MD\in \R$. For both coefficients \(\sigma\in\{\sigma_+,\sigma_{\mathrm{sym}}\}\), we have
\[
    \sigma\!\left(M+\frac D2\right)=1+\sqrt D\,A(z),
    \qquad
    \sigma\!\left(M-\frac D2\right)=1+\sqrt D\,B(z).
\]
Therefore, the functions \(\Delta\) and \(\Gamma\) from
\eqref{eq:Delta-def}--\eqref{eq:Gamma-def} satisfy
\begin{align}\label{eq:sqrt-profiles}
    \Delta(M,D)&=\sqrt D\,h(z),\quad     \Gamma(M,D)=1+\sqrt D\,g(z).
\end{align}
In particular,
\begin{equation}\label{eq:sqrt-Delta-Q}
    \Delta(M,D)^2=D h(z)^2 = DQ(z).
\end{equation}

It remains to show \eqref{eq:direct-condition}, which is implied by \(g\ge0\). We set
\[
    a:=z+\frac12,
    \qquad
    b:=z-\frac12,\quad z>\frac12.
\]
For  both $\sigma_+$ and $\sigma_{\mathrm{sym}}$, we have
\[
\begin{aligned}
    g(z)
    &=a\sqrt b-b\sqrt a=\sqrt{ab}\,(\sqrt a-\sqrt b)\ge0,\quad z>\frac12.
\end{aligned}
\]
For \(\sigma=\sigma_+\), we have 
\begin{align*}
g(z) = 
\begin{cases}
0,\quad z \in (-\infty,-\frac12),\\
    \left(\frac12-z\right)\sqrt{z+\frac12},\quad   z\in  [-\frac12,\frac12],
\end{cases}
\end{align*}
which is nonnegative. For \(\sigma=\sigma_{\mathrm{sym}}\), the function \(g\) is even.
For \(0\le z\le\frac12\), we set
\[
    a:=z+\frac12,
    \qquad
    b:=\frac12-z,
\]
to get 
\[
    g(z)=b\sqrt a+a\sqrt b\ge0,\quad z \in [0,\frac12].
\]
To see that \eqref{eq:direct-condition} holds, we insert $g\ge0$ into \eqref{eq:sqrt-profiles} to see 
\[
    \Gamma(M,D)\ge1,
    \qquad M\in\mathbb R,\quad D>0.
\]
Combining $\Gamma\ge1$ with \eqref{eq:sqrt-Delta-Q} yields \eqref{eq:direct-condition} because
\[
    DQ\!\left(\frac MD\right)\Gamma(M,D)^2
    \ge
    DQ\!\left(\frac MD\right)
    =
    \Delta(M,D)^2,\quad M\in\mathbb R,\quad D>0.
\]
\hfill\(\diamondsuit\)
\end{example}

\subsection{Extension to countable cusps}
In this section we briefly describe an extension to countably many cusps. We say that a coefficient $\sigma$ is \emph{countably piecewise locally Lipschitz}
on $I=(\ell,r)$ if and only if there exists a strictly increasing sequence of \emph{cusps}
$(a_n)_{n\in\mathbb Z}\subset I$ such that
\[
    \lim_{n\to-\infty}a_n=\ell,
    \qquad
    \lim_{n\to\infty}a_n=r,
\]
and
\[
    \sigma\big|_{(a_n,a_{n+1})}
    \in \operatorname{Lip}_{\mathrm{loc}}(a_n,a_{n+1}),
    \qquad n\in\mathbb Z.
\]

We adjust  \eqref{eq:Sigma-def}-\eqref{eq:Gamma-def} to 
      \begin{align*}
    \Sigma_n(M,D)
      &:=\frac{\sigma(a_n+M+\frac{D}2)+\sigma(a_n+M-\frac{D}2)}2,\\
    \Delta_n(M,D)
      &:=\sigma(a_n+M+\frac{D}2)-\sigma(a_n+M-\frac{D}2),\\
    \Gamma_n(M,D)
      &:=\Sigma_n(M,D)-\frac MD\Delta_n(M,D),
\end{align*}
where 
 $D>0$ and $M\in\mathbb R$ are such that
$a_n+M+\frac D2,\ a_n+M-\frac D2\in I$ (for $\Sigma_n$ and $\Delta_n$, we also allow $D=0$). In the following result, no uniformity in $R_n$, $Q_n$, or $C_n$ is required. While Proposition~5.5.22 in Karatzas and Shreve (1991) rules out oscillatory behavior for $X^z$ in finite time,  $\P(\xi^z=\infty)<1$ is possible when $I$ has at least one finite endpoint (see, e.g.,  Example \ref{ex:1}).  So, unlike \eqref{strict0}, we need to cap with  $\xi^z$  in \eqref{strict1}  below.

\begin{corollary}\label{thm:countably-many-cusps} In addition to Assumption~\ref{ass1}, assume that $\sigma$ is countably piecewise locally Lipschitz.
For all $n\in\mathbb Z$, assume there are: a constant $R_n>0$, a continuous
nonnegative function $Q_n:\mathbb R\to[0,\infty)$, and a constant
$C_n<\infty$ such that
\[
    \frac32 R_n
    <\min\{a_n-a_{n-1},\,a_{n+1}-a_n\},
\]
\begin{align}
    \int_0^b Q_n(u)\,\dd u
      &\le C_n\log(1+b),
      &
    \int_{-b}^0 Q_n(u)\,\dd u
      &\le C_n\log(1+b),
      \qquad b>0,
      \label{eq:Qn-growth}
\end{align}
and
\begin{equation}
    \Delta_n(M,D)^2
    \le
    D\,Q_n\!\left(\frac MD\right)\Gamma_n(M,D)^2,
    \qquad |M|<R_n,\quad 0<D<R_n.
    \label{eq:direct-condition-n}
\end{equation}
Then, for all $x<y$ in $I$,
\begin{align}\label{strict1}
    \P\!\left(
       X_t^x<X_t^y
       \text{ for all }0\le t<\xi^x\wedge\xi^y
    \right)=1.
\end{align}
\end{corollary}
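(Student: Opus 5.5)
The plan is to reduce Corollary~\ref{thm:countably-many-cusps} to a cusp-by-cusp application of the argument in Theorem~\ref{thm:general-criterion}, and to handle meetings away from the cusps by the local Lipschitz argument. Fix $x<y$ in $I$ and define $M_t$, $D_t$ as in \eqref{MandD}, now only up to $\xi^x\wedge\xi^y$. Let $\tau$ be the first meeting time \eqref{firstmeetingtime}, restricted to $\tau<\xi^x\wedge\xi^y$. By path continuity, on $\{\tau<\xi^x\wedge\xi^y\}$ the common value $X^x_\tau=X^y_\tau$ lies in $I$. Hence either it equals some cusp $a_n$, or it lies in some open interval $(a_n,a_{n+1})$. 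Since $\mathbb Z$ is countable, it suffices to show that each of the events
\[
\{\tau<\xi^x\wedge\xi^y,\ X^x_\tau=a_n\}
\qquad\text{and}\qquad
\{\tau<\xi^x\wedge\xi^y,\ X^x_\tau\in(a_n,a_{n+1})\}
\]
has probability zero.

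For the cusp events, I would rerun Steps~1--3 of the proof of Theorem~\ref{thm:general-criterion} with $M$ replaced by the shifted midpoint $\tilde M_t:=M_t-a_n$. The dynamics are then given by $\Sigma_n$ and $\Delta_n$, and I would use the Lyapunov function $H_n(M,D):=-\log D+D\varphi_n(M/D)$, where $\varphi_n$ is built from $Q_n$ as in \eqref{eq:phi-streamlined}. The It\^o drift is \eqref{eq:H-drift-streamlined} with the subscript $n$, and it is nonpositive by \eqref{eq:direct-condition-n} whenever $|\tilde M|<R_n$ and $0<D<R_n$.

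Choose $\rho_n$ as in \eqref{eq:rho-streamlined} with $R_n,C_n$. On $|\tilde M|\le\rho_n$ and $D\le\rho_n$, both points $a_n+\tilde M\pm D/2$ lie within $\frac32\rho_n<\frac32R_n$ of $a_n$. The spacing hypothesis therefore keeps them inside $(a_{n-1},a_{n+1})\subset I$. Consequently the localized processes never reach the boundary of $I$ before the stopping times $\theta_\varepsilon$. On the compact region $\{|\tilde M|\le\rho_n,\ \varepsilon\le D\le\rho_n\}$, $\sigma$ is continuous and bounded, so the martingale and expectation bounds \eqref{eq:stopped-expectation-streamlined2} and \eqref{eq:uniform-expectation-fatou} hold verbatim. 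The lower bound \eqref{eq:H-blowup-streamlined} for $H_n$ depends only on \eqref{eq:Qn-growth}. The Fatou step and the countable union over rational $q$, integers $T$ and integers $K$ then show that the event of meeting at $a_n$ before $\xi^x\wedge\xi^y$ is null.

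For meetings at a point $c\in(a_n,a_{n+1})$, I would localize to a compact interval $J\subset(a_n,a_{n+1})$ containing $c$ in its interior, on which $\sigma$ is Lipschitz with some constant $L$ and bounded. Before exiting $J$, the difference satisfies $\dd D_t=\Delta\,\dd B_t$ with $|\Delta|\le L D_t$. Consequently $\log D_t$ has bounded drift and bounded martingale part up to the exit time from $J$, so $\log D_t$ cannot reach $-\infty$ in finite time. This is the standard argument omitted in Step~4. A countable union over rational endpoints of $J$, rational starting times $q$ and integers $T$ removes the dependence on $\omega$.

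The main obstacle I expect is the bookkeeping that the localized Lyapunov argument never leaves the domain where the cusp-$n$ condition applies. The constraint $\frac32R_n<$ gap is exactly what prevents interference from the neighbouring cusps and from the endpoints of $I$. I would also check that capping at $\xi^x\wedge\xi^y$ rather than using global existence causes no problem. It does not, because all the stopping times used above occur strictly before exit from $I$.
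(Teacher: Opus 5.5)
Your proposal is correct and takes the same route as the paper. The paper's proof only says that collisions away from the cusps are excluded by the local Lipschitz argument and that collisions at each cusp $a_n$ are handled as in the proof of Theorem~\ref{thm:general-criterion}, with details omitted. Your write-up supplies exactly those details: the shifted midpoint $M_t-a_n$, the Lyapunov function built from $Q_n$, and the spacing condition keeping $a_n+M\pm D/2$ in a compact subset of $I$, so that all localization happens strictly before $\xi^x\wedge\xi^y$. You also handle the countable union over $n\in\mathbb Z$, the rational times and the integer levels.
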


\begin{proof}
Collision away from the cusps $(a_n)_{n\in\mathbb Z}$ is excluded by the
local Lipschitz argument. Collision at a cusp $a_n$ is handled as in
the proof of Theorem~\ref{thm:general-criterion}. For brevity, we omit
the details.
\end{proof}

\section{Insufficiency of $W^{1,p}_\text{loc}$ for  $p\in(1,2)$}\label{sec:counterexample}

\subsection{Two counterexamples}
For a constant $\beta>0$, we define the bounded coefficients
\begin{align}\label{both_trunc}
    \sigma_+(x):=1+(x^+\wedge2)^\beta,
    \qquad
    \sigma_{\mathrm{sym}}(x):=1+(|x|\wedge2)^\beta,\quad x\in \R.
\end{align}
The following theorem shows that strict comparison fails for these coefficients and it has the following consequence for $p\in(1,2)$. For $0<\beta<\frac12$, we have 
\begin{align}\label{W1pmembership}
    \sigma_+,\sigma_{\mathrm{sym}}\in W^{1,p}_{\mathrm{loc}}(\R)
    \quad\Longleftrightarrow\quad
    1\le p<\frac1{1-\beta},
\end{align}
we can choose $\beta \in (1-\frac1p,\frac12)$ to produce a bounded, uniformly positive
 coefficient in $W^{1,p}_{\mathrm{loc}}(\R)$ for which  pathwise uniqueness and global strong existence hold but for which  strict comparison fails.

 When $\beta>\frac12$,  both untruncated 
coefficients in  \eqref{eq:untruncated-beta}  belong to $ W^{1,2}_{\mathrm{loc}}(\mathbb R)$ and so strict comparison holds by \eqref{Yamada1986}. The case $\beta=\frac12$ is covered by
Example~\ref{ex:sqrt-positive-part}, whereas Theorem~\ref{thm:main} produces
failure of strict comparison for $\beta\in(0,\frac12)$. All in all,  for the two formulas in  \eqref{eq:untruncated-beta} with $\beta>0$,
strict comparison holds if and only if $\beta\ge\frac12$.

\begin{theorem}\label{thm:main} Let  $0<\beta<\frac12$. 
For both coefficients in \eqref{both_trunc}, Assumption \ref{ass1} holds for $I:=\R$.
Nevertheless, there is a constant $d_{\beta}>0$ such that
\[
\forall d \in (0,d_{\beta}):\quad     \P\!\left(
       X_t^{-\frac{d}2}=X_t^{\frac{d}2}
       \text{ for some }t<\infty
    \right)>0.
\]
Consequently, strict comparison fails.  Furthermore, Assumption \ref{ass1} holds for $I:=\R$ for the untruncated coefficients \eqref{eq:untruncated-beta} but strict comparison again fails.
\end{theorem}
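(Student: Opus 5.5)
Assumption~\ref{ass1} is quick to verify. Both truncated and untruncated coefficients are continuous and bounded below by $1$. Condition \eqref{NakaoLeGall} holds on every compact $J$: for $\sigma_+$ take $f(x)=(x^+\wedge 2)^{2\beta}$ as in Example~\ref{ex:new}, and for $\sigma_{\mathrm{sym}}$ take $f(x)=\mathrm{sgn}(x)(|x|\wedge2)^{2\beta}$. The latter works because $\big||x|^\beta-|y|^\beta\big|^2\le |x|^{2\beta}+|y|^{2\beta}$ when $x,y$ have opposite signs, and the same-sign case follows from Example~\ref{ex:new}. Le Gall's criterion then gives pathwise uniqueness. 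Engelbert--Schmidt gives weak existence, Yamada--Watanabe gives strong existence, and Proposition~5.5.22(a) in Karatzas and Shreve gives $\xi^z=\infty$ since the scale is affine.

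For failure of strict comparison, I reuse the coordinates $M,D$ from \eqref{MandD} with the dynamics \eqref{dMdD}, now with starting point $M_0=0$, $D_0=d$. By scaling, for $|M|,D$ small and $z=M/D$, we have $\sigma(M\pm D/2)=1+D^\beta A_\beta(z)$ or $1+D^\beta B_\beta(z)$, where the profiles are as in Example~\ref{ex:sqrt-3B} with exponent $\beta$. Hence $\Delta=D^\beta h(z)$ and $\Sigma=1+O(D^\beta)$. I then pass to the ratio $Z=M/D$ and a logarithmic time change. Setting $L=\log D$, It\^o gives $\dd L = D^{\beta-1}h\,\dd B - \tfrac12 D^{2\beta-2}h^2\,\dd t$, while $Z$ has diffusion coefficient of order $D^{-1}\Sigma\approx D^{-1}$. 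Under the clock $\dd s = D^{-2}\dd t$, $Z$ is (approximately) a Brownian-like diffusion, and $L$ has martingale part $D^{\beta}h(Z)\,\dd W_s$ and drift $-\tfrac12 D^{2\beta}h(Z)^2\,\dd s$, both small. The point is that, unlike $\beta=\tfrac12$ where the contributions balance (the $\log$ Lyapunov function of Theorem~\ref{thm:general-criterion}), for $\beta<\tfrac12$ the Lyapunov test reverses.

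Concretely, I would construct a \emph{sub}-solution detecting collision: a function $G(M,D)=D^{\gamma}\psi(M/D)$ with $\gamma>0$ small (say $0<\gamma<1-2\beta$) and $\psi>0$ chosen with $\psi''$ negative and growth tempered so that the $\Sigma^2 G_{MM}$ term dominates. The drift of $G(M_t,D_t)$ is
\[
\tfrac12\Big(\Sigma^2 G_{MM}+2\Sigma\Delta G_{MD}+\Delta^2 G_{DD}\Big),
\]
and $\Sigma^2G_{MM}=\Sigma^2D^{\gamma-2}\psi''(z)$ dominates the $\Delta$-terms, which carry an extra $D^{\beta}$ factor. With $\psi''<0$ appropriately, $G(M,D)$ is a positive local supermartingale that decays, and one shows it reaches $0$ (i.e.\ $D$ hits $0$) with positive probability before $|M|$ or $D$ leaves a small box $(-\rho,\rho)\times(0,\rho)$. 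This box is where the untruncated and truncated coefficients coincide, so both cases follow simultaneously. An alternative, perhaps cleaner, route is a Feller-test style comparison: under the time change, $Z$ is recurrent-like, and $\log D$ accumulates drift $-\tfrac12\int D^{2\beta}h(Z)^2\,\dd s$ that is not compensated. I would pick whichever gives a clean inequality.

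The main obstacle is the non-compactness of $z=M/D$. Near collision, $M$ fluctuates at order $\sqrt{t}$ while $D$ shrinks, so $z$ becomes large, and there $h(z)\sim z^{\beta-1}$ decays, weakening the contraction. Everything hinges on showing that $Q=h^2$ decays slowly enough (compared with the borderline $1/(1+|z|)$ from \eqref{pointcond}, $h(z)^2\sim |z|^{2\beta-2}$ is integrable, which is the delicate point) that $D$ still reaches $0$ in finite time with positive probability. My first attempt would be the ansatz $\psi(z)=(1+z^2)^{\kappa}$ with $\kappa$ tuned against $\gamma$, then checking that the drift is $\le -cD^{\gamma-2+2\beta}(\cdot)<0$ so that the supermartingale is strictly decreasing at a rate forcing hitting zero in finite time. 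I would then conclude by optional stopping, yielding $\P(\tau<\infty)>0$ for all $d<d_\beta$ with $d_\beta$ tied to $\rho$.
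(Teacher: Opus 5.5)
Your verification of Assumption~\ref{ass1} agrees with the paper: Le Gall's condition with the two choices of $f$, then Engelbert--Schmidt, Yamada--Watanabe, and the affine scale. So does your remark that the truncated and untruncated equations coincide in a small box. The collision argument, however, has a genuine gap.

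\textbf{The proposed sub-solution cannot exist.} A function $\psi>0$ with $\psi''<0$ on all of $\R$ does not exist, since a concave function bounded below on $\R$ is constant. For your ansatz $\psi(z)=(1+z^2)^\kappa$ with $\kappa\neq0$, $\psi''$ is positive on an open set: at $z=0$ if $\kappa>0$, and for large $|z|$ if $\kappa<0$. By your own scaling, the $\Delta$-terms in the drift of $G=D^\gamma\psi(M/D)$ are smaller than $\frac12\Sigma^2D^{\gamma-2}\psi''(M/D)$ by factors of order $(D+|M|)^\beta$, away from zeros of $\psi''$. So in a small box the drift of $G$ is positive wherever $\psi''>0$. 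Nothing here uses $\beta<\frac12$. If the argument worked, it would equally produce collisions for $\beta\in[\frac12,1)$, contradicting Theorem~\ref{thm:general-criterion} (Example~\ref{ex:sqrt-3B}) and Yamada (1986).

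The structural obstruction is this. To get $\mathbb P(\tau<\infty)>0$ from a nonnegative supermartingale by optional stopping, you need $G(0,d)$ small compared with $\inf G$ on the exit boundary $\{|M|=\rho\}\cup\{D=\rho\}$. Since $|z|=\rho/D$ is unbounded there, this forces $\psi(z)\ge c|z|^\gamma$ at infinity, hence convexity somewhere. The only concave choice, $G=D^\gamma$, is a supermartingale for every $\beta\in(0,1)$ and gives no control of exits through $|M|=\rho$. Likewise, a negative drift of a positive supermartingale does not by itself force hitting zero.

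\textbf{The missing idea.} The decisive contraction of $D$ happens only while $M$ is within $O(D)$ of the cusp, and $\frac1D h(M/D)^2$ behaves like $\|h\|_{L^2(\R)}^2\,\delta_0(M)$. So the integrability of $h^2$, which you flag as a delicate obstacle, is exactly what drives the collision. The paper makes this rigorous in four steps.
\begin{enumerate}
\item $D^{1-2\beta}$ has drift $-\beta(1-2\beta)h(M/D)^2/D$.
\item Set $K(M,D)=D\phi(M/D)+(\int_0^\infty h^2)M^++(\int_{-\infty}^0h^2)M^-$ with $\phi''=-h^2$. Its drift is $-\frac{h^2}{2D}\Gamma^2\,\dd t+\frac12\|h\|_{L^2(\R)}^2\,\dd L^0_t(M)$. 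On the unit box, $0\le K\le CD^{1-2\beta}$, because $h(z)^2\le C|z|^{2\beta-2}$ is integrable exactly when $\beta<\frac12$.
\item Hence $V=D^{1-2\beta}-\varepsilon K$ yields $\mathbb E\,L^0_{\lambda\wedge\rho\wedge\tau}(M)\le Cd^{1-2\beta}$, where now $\rho$ is the exit time from $(-1,1)\times(0,1)$ and $\lambda$ is the first time $L^0(M)$ reaches $1$.
\item By Dambis--Dubins--Schwarz, $M$ accumulates local time $\ge1$ at $0$ before leaving $(-1,1)$ with probability $e^{-1}$. Together with Ville's inequality for $D$, this gives $\mathbb P(\tau<\rho)\ge e^{-1}-d-Cd^{1-2\beta}>0$ for small $d$.
\end{enumerate}

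A single test function in your spirit does exist (my observation, not the paper's): $V+\varepsilon(\int_0^\infty h^2)M^++\varepsilon(\int_{-\infty}^0h^2)M^-=D^{1-2\beta}-\varepsilon D\phi(M/D)$. For small $\varepsilon$ it is a nonnegative supermartingale in the box. It equals $d^{1-2\beta}$ at time $0$ and is bounded below by a positive constant on the exit boundary. But it is \emph{convex} in $M$ and mixes two different powers of $D$. That mixing is what lets the $\Delta$-driven drift of $D^{1-2\beta}$ absorb the positive $\Sigma^2$-drift of the convex part, which no single-power ansatz $D^\gamma\psi(M/D)$ can achieve.
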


\begin{proof} 

\noindent{\bf Step 1/5:} For $\sigma_+$ in \eqref{both_trunc}, we define
\[
    f(x):=(x^+\wedge2)^{2\beta},\quad x\in\R.
\]
For $\sigma_{\mathrm{sym}}$ in \eqref{both_trunc}, we define
\[
    f(x):=\operatorname{sgn}(x)(|x|\wedge2)^{2\beta},\quad x\in\R\setminus \{0\}, 
    \qquad f(0):=0.
\]
In either case, $f$ is increasing and continuous. The inequality
\[
    (a-b)^2\le a^2-b^2,
    \qquad a\ge b\ge0,
\]
gives for $0\le x<y$ and $b:=(|x|\wedge2)^\beta$ and $a:=(|y|\wedge2)^\beta$ the inequality
\[
    \big(\sigma_{\mathrm{sym}}(x)-\sigma_{\mathrm{sym}}(y)\big)^2\le f(y)-f(x).
\]
A similar argument works for  $x<y\le0$. For $x<0<y$, we have for $b:=(|x|\wedge2)^\beta$ and $a:=(|y|\wedge2)^\beta$ the inequality
\[
 \big(\sigma_{\mathrm{sym}}(x)-\sigma_{\mathrm{sym}}(y)\big)^2=(b-a)^2\le a^2+b^2=f(y)-f(x).
\]
Because the argument for $\sigma_+$ is similar, we omit it. All in all, \eqref{NakaoLeGall} holds and gives pathwise uniqueness.  Weak existence and the
Yamada--Watanabe principle give a pathwise unique strong solution up to its
lifetime $\xi^z$. As already explained in the beginning of the proof of Theorem \ref{thm:general-criterion}, the strong solution is global (i.e., $\xi^z =\infty$ for all $z\in \R$).

For both coefficients $\sigma_+$ and $\sigma_\text{sym}$, the weak derivative has its only unbounded singularity at zero, which is comparable to $|x|^{\beta-1}$ as $x\to 0$ for $\sigma_\text{sym}$ and as $x\downarrow 0$ for $\sigma_+$.  Therefore, we have
\[
    \sigma' \in L^p_{\mathrm{loc}}(\R)
    \quad\Longleftrightarrow\quad
    p(\beta-1)>-1.
\]

\noindent{\bf Step 2/5:} Fix $d\in(0,1)$ and define
\[
    M_t:=\frac{X_t^{\frac{d}2}+X_t^{-\frac{d}2}}2,
    \qquad
    D_t:=X_t^{\frac{d}2}-X_t^{-\frac{d}2},\quad t\ge0.
\]
Comparison ensures that $D$ is a nonnegative continuous local
martingale.  Pathwise uniqueness ensures that $D$ remains at zero after
it first hits zero.  Define the stopping times
\begin{align}\label{taurho}
    \tau:=\inf\{t\ge0:D_t=0\},
    \qquad
    \rho:=\inf\{t\ge0:|M_t|\ge1\text{ or }D_t\ge1\},
\end{align}
and the open rectangle 
\[
    \mathcal R:=\{(M,D)\in\R\times(0,\infty): |M|<1,\ 0<D<1\}.
\]
Because $(M_0,D_0) \in     \mathcal R$, we have $(M_t,D_t)\in\mathcal R$ and
\[
    \left|M_t\pm\frac{D_t}{2}\right|
    \le |M_t|+\frac{D_t}{2}<\frac32,\quad t<\tau\wedge\rho.
\]
This estimate makes the truncations  in \eqref{both_trunc} inactive because both 
\[
    X_t^{-\frac{d}2}=M_t-\frac{D_t}{2},
    \qquad
    X_t^{\frac{d}2}=M_t+\frac{D_t}{2},\quad t<\tau\wedge\rho,
\]
belong to $(-\frac32,\frac32)$.  For $z\in\R$, we define $A(z)$ and $B(z)$ by
\[
\begin{array}{c|cc}
 & A(z) & B(z) \\ \hline
\sigma=\sigma_+ & (z+\frac12)_+^\beta & (z-\frac12)_+^\beta \\[1mm]
\sigma=\sigma_{\mathrm{sym}}
 & |z+\frac12|^\beta & |z-\frac12|^\beta
\end{array}
\]
and --- as in \eqref{eq:sqrt-h-g} --- define 
\begin{align}\label{h_g}
    h(z):=A(z)-B(z),
    \qquad
    g(z):=\frac{A(z)+B(z)}2-z h(z),\quad z\in\R.
\end{align}
For $D>0$ and $M\in\R$, we have the representations
\begin{align*}
 \sigma_{\mathrm{sym}}\!\left(M\pm\frac D2\right)
 &=1+D^\beta
 \left(
   \left|\frac MD\pm\frac12\right|\wedge\frac2D
 \right)^\beta,\\
 \sigma_+\!\left(M\pm\frac D2\right)
 &=1+D^\beta
 \left(
   \left(\frac MD\pm\frac12\right)^+\wedge\frac2D
 \right)^\beta.
\end{align*}
The truncations are inactive when
\[
\sigma_{\mathrm{sym}}:    \left|M\pm\frac D2\right|\le2\quad \text{ and }\quad \sigma_+:   \left(M\pm\frac D2\right)^+\le2.
\]
 In particular, the common sufficient condition $|M\pm\frac D2|<2$ holds throughout $\mathcal R$.  Therefore, for  $(M,D) \in \mathcal R$, the functions $\Sigma$, $\Delta$, and $\Gamma$
from \eqref{eq:Sigma-def}--\eqref{eq:Gamma-def} become

\begin{equation}\label{eq:counter-profiles}
\begin{aligned}
    \Sigma(M,D)&=1+\frac{D^\beta}{2}
       \Big(A\big(\frac MD\big)+B\big(\frac MD\big)\Big),\\
    \Delta(M,D)&=D^\beta h\big(\frac MD\big),\\
    \Gamma(M,D)&=1+D^\beta g\big(\frac MD\big).
\end{aligned}
\end{equation}
Below, we will use \eqref{eq:counter-profiles} only on $\mathcal R$ and only for $(M_t,D_t)$ for times $t<\tau\wedge\rho$.

The rest of this step derives bounds for $h$ and $g$ defined in \eqref{h_g}. First, we consider $h$, which is trivially continuous.  For $\sigma_+$, $h(x)=0$ for 
$x\in (-\infty,-\frac12]$. For $\sigma_{\mathrm{sym}}$, $h$ is odd.  Therefore, to prove $h\in L^2(\R)$, it suffices to show that $h$ is square integrable at $+\infty$.  Both $\sigma_+$ and $\sigma_\text{sym}$ give 
\[
    h(z)
    =
    \left(z+\frac12\right)^\beta
    -
    \left(z-\frac12\right)^\beta>0, \quad z\ge1.
\]
The Mean-Value Theorem applied to $u\mapsto u^\beta$ produces
$\xi_z\in(z-\frac12,z+\frac12)$ such that
\[
    h(z)=\beta \xi_z^{\beta-1},\quad z\ge1.
\]
Because $\beta<1$ the function $u\to u^{\beta-1}$ is decreasing  and so the lower bound
\[
    \xi_z> z-\frac12\ge \frac z2, \quad z\ge1,
\]
gives the upper bound
\begin{align}\label{h_growth}
  0< h(z)
    \le
    \beta\left(\frac z2\right)^{\beta-1},
    \qquad z\ge1.
\end{align}
Because $\beta<\frac12$, $z\mapsto z^{2(\beta-1)}$ is integrable as $z\to\infty$, hence, \eqref{h_growth} gives $h\in L^2(\R)$.

Next, we consider $g$ in \eqref{h_g}. $g$ is trivially continuous too. We will show $g \ge0$. Both coefficients $\sigma_+$ and $\sigma_\text{sym}$ satisfy for
$a:=z+\frac12$ and $b:=z-\frac12$,
\begin{align}\label{g_property}
    g(z)=a b^\beta-b a^\beta
        =ab\bigl(b^{\beta-1}-a^{\beta-1}\bigr)\ge0,\quad z>\frac12.
\end{align}
For $\sigma=\sigma_+$, we have $g(z)=0$ when $z\le-\frac12$, and
\[
    g(z)=\left(\frac12-z\right)
          \left(z+\frac12\right)^\beta\ge0,
    \qquad z\in[-\frac12,\frac12].
\]
For $\sigma=\sigma_{\mathrm{sym}}$, the function $g$ is even and satisfies for $a:=z+\frac12$ and $b:=\frac12-z$, 
\[
    g(z)=b a^\beta+a b^\beta\ge0,\quad z\in[0,\frac12].
\]
We need the following upper bound for $g$. For $z\ge1$, we set $a:=z+\frac12$ and $b:=z-\frac12$. For both coefficients $\sigma_+$ and $\sigma_\text{sym}$, we have \eqref{g_property}. The Mean-Value Theorem applied to the decreasing function $u\mapsto u^{\beta-1}$ produces $\xi_z\in(b,a)$ such that
\[
\begin{aligned}
g(z) =ab\bigl(b^{\beta-1}-a^{\beta-1}\bigr)=
   ab (1-\beta)\xi_z^{\beta-2}(a-b)    =
   ab (1-\beta)\xi_z^{\beta-2}.
\end{aligned}
\]
Since $\beta<2$ and $\xi_z\ge b$, this gives
\[
    g(z)
    \le
    (1-\beta)a b^{\beta-1}\le     (1-\beta)\frac32 z  \left(\frac z2\right)^{\beta-1},
\]
where the last inequality uses 
\[
    a\le\frac{3z}2,
    \qquad
    b\ge\frac z2 ,\quad z\ge1.
\]
For $\sigma=\sigma_+$, we have $g(z)=0$ for $z\le-\frac12$. For 
$\sigma=\sigma_{\mathrm{sym}}$, the function $g$ is even.  Together with
boundedness on compact intervals, this gives an irrelevant constant $C>0$ such that ($C$ will change throughout the proof)
\begin{align}\label{gbound2}
    0\le g(z)\le C(1+|z|^\beta),
    \qquad z\in\R.
\end{align}

The bound in \eqref{gbound2} allows us to bound $\Gamma$ from \eqref{eq:counter-profiles} on 
 $ \mathcal R$ as follows. 
For $|M|\le1$ and $0<D\le1$, the bounds $g\ge0$ and  \eqref{gbound2} produce
\[
\begin{aligned}
1\le\Gamma(M,D)    &= 1 + D^\beta g(\frac MD)\\
    &\le 1+    C D^\beta    \left(        1+\left|\frac MD\right|^\beta    \right)\\
    &= 1+     C\bigl(D^\beta+|M|^\beta\bigr)\\
    &\le 1+ 2C.
\end{aligned}
\]
Consequently, after increasing $C$ if necessary, we have
\begin{equation}\label{eq:counter-Gamma-bound}
    1\le\Gamma(M,D)\le C,
    \qquad |M|\le1,
    \quad 0<D\le1.
\end{equation}

\noindent{\bf Step 3/5:} We define
\begin{equation}\label{eq:counter-K}
\begin{aligned}
    &K(M,D):=
    -D\int_0^{\frac MD}
       \left(\frac{M}{D}-u\right)h(u)^2\,\dd u\\
    &+\left(\int_0^\infty h(u)^2\,\dd u\right)M^+
     +\left(\int_{-\infty}^0 h(u)^2\,\dd u\right)M^-,\quad M\in\R,\quad D>0, 
\end{aligned}
\end{equation}
where the second line uses $h\in L^2(\R)$ from Step 2. The function $K$ is nonnegative because all 4 terms on the right-hand side of
\begin{align}\label{K_rep1}
    \frac{K(M,D)}{D}
    &=
    \begin{cases}
    \int_0^{\frac MD} u h(u)^2\,\dd u
    +
    \frac{M}{D}
    \int_{\frac MD}^\infty h(u)^2\,\dd u, \quad M\ge0,\quad D>0,\\
    -\frac{M}{D}
    \int_{-\infty}^{\frac MD} h(u)^2\,\dd u
    -
    \int_{\frac MD}^0 u h(u)^2\,\dd u, \quad M\le0,\quad D>0,
    \end{cases}
\end{align}
are nonnegative. We need the following upper bound on $K$.  For $\sigma_+$, we have $h(z)=0$ for $z\le-\frac12$, while for
$\sigma_{\mathrm{sym}}$, $h$ is odd. Hence, \eqref{h_growth} implies
\begin{align}\label{hsquared}
    h(z)^2\le C|z|^{2\beta-2},
    \qquad |z|\ge1.
\end{align}
For $D>0$, we set $z:= \frac M D$. When  $M\ge0$ and $0\le z\le1$, \eqref{K_rep1} ensures that $ \frac{K(M,D)}D$ is uniformly bounded (uses  $h\in L^2(\R)$). When  $M\ge0$ and $z\ge1$,  \eqref{K_rep1} and \eqref{hsquared} produce the bound
\begin{align}\label{new_K_bound}
\begin{split}
    \frac{K(M,D)}D
    &=
    \int_0^{z}u h(u)^2\,\dd u
    +
    z\int_{z}^{\infty}h(u)^2\,\dd u\\
    &\le
    C+C\int_1^{z}u^{2\beta-1}\,\dd u+    Cz
    \int_{z}^{\infty}u^{2\beta-2}\,\dd u\\
    &\le
    C\left(
        1+z^{2\beta}
    \right)+ \frac{C}{1-2\beta}
    z^{2\beta},
\end{split}
\end{align}
where the first inequality uses $\beta\in (0,\frac12)$. When $M\le0$, we set
\[
    r:=-z=\left|z\right|.
\]
Then,  \eqref{K_rep1}  produces the representation
\begin{align}\label{K_rep111}
    \frac{K(M,D)}D
    =
    r\int_{-\infty}^{-r}h(u)^2\,\dd u
    -
    \int_{-r}^0u h(u)^2\,\dd u,\quad M\le 0,\quad D>0.
\end{align}
When $0\le r\le1$,  $h\in L^2(\R)$ and \eqref{K_rep111} ensure $ \frac{K(M,D)}D$ is uniformly bounded. For $r\ge1$, we use the change of variables $v=-u$ in \eqref{K_rep111} and \eqref{hsquared} to produce the bound
\begin{align}\label{new_K_bound2}
\begin{split}
    \frac{K(M,D)}D    &\le
    Cr\int_r^\infty v^{2\beta-2}\,\dd v + \int_{-r}^0|u|h(u)^2\,\dd u\\
    &\le
    Cr^{2\beta}+C+C\int_1^r v^{2\beta-1}\,\dd v\\
    &\le Cr^{2\beta}+C(1+r^{2\beta}).
\end{split}
\end{align}
For all $|M|\le1$ and $0<D\le1$, the bounds \eqref{new_K_bound} and \eqref{new_K_bound2} produce
\begin{align}\label{eq:counter-K-bound}
    K(M,D)
    &\le
    CD\left(        1+|z|^{2\beta}    \right)=
    C\left(        D+D^{1-2\beta}|M|^{2\beta}    \right) 
    \le 2C D^{1-2\beta},
\end{align}
where the last inequality uses $0<\beta<\frac12$ so that $D \le D^{1-2\beta}$.

To apply It\^o's lemma, we need derivatives of the first term in
\eqref{eq:counter-K} given by
\[
    J(M,D):=D\phi\!\left(\frac MD\right),\;M\in \R,\; D>0,\;
    \phi(z):=
    -\int_0^z(z-u)h(u)^2\,\dd u,\; z\in \R.
\]
Since $h$ is continuous,
\[
    \phi'(z)
    =
    -\int_0^z h(u)^2\,\dd u,
    \qquad
    \phi''(z)=-h(z)^2,\quad z\in \R.
\]
Therefore, $\phi\in C^2(\R)$ and so  $J\in C^2$ on
$\R\times(0,\infty)$. Direct differentiation gives
\begin{align*}
    J_M(M,D)
&    =
    -\int_0^{\frac MD}h(u)^2\,\dd u,\quad     J_D(M,D)
    =
    \int_0^{\frac MD}u h(u)^2\,\dd u,\\
    J_{MM}(M,D)    &=
    -\frac1D h(\frac MD)^2,\quad     J_{MD}(M,D)
    =
    \frac{M}{D^2}h(\frac MD)^2,\quad     J_{DD}(M,D)
    =
    -\frac{M^2}{D^3}h(\frac MD)^2.
\end{align*}
For $t<\tau\wedge\rho$, the dynamics in \eqref{dMdD} produce the drift of $J(M_t,D_t)$ to be
\[
\begin{aligned}
&\frac12J_{MM}(M_t,D_t)\Sigma(M_t,D_t)^2
+J_{MD}(M_t,D_t)
  \Sigma(M_t,D_t)\Delta(M_t,D_t)\\
&\qquad
+\frac12J_{DD}(M_t,D_t)\Delta(M_t,D_t)^2\\
&=
-\frac{h(\frac {M_t}{D_t})^2}{2D_t}
\left(
    \Sigma(M_t,D_t)
    -\frac{M_t}{D_t}\Delta(M_t,D_t)
\right)^2\\
&=
-\frac{h(\frac {M_t}{D_t})^2}{2D_t}
\Gamma(M_t,D_t)^2.
\end{aligned}
\]
For the $M^+$ and $M^-$ terms in 
\eqref{eq:counter-K}, Tanaka's formula produces the dynamics
\begin{equation}\label{eq:counter-K-Ito}
\begin{aligned}
    \dd K(M_t,D_t)
    ={}&\dd N_t^K
       -\frac{h(\frac {M_t}{D_t})^2}{2D_t}\Gamma(M_t,D_t)^2\,\dd t
   +\frac{\|h\|_{L^2(\R)}^2}{2}\,\dd L_t^0(M),\quad t< \tau\wedge\rho,
\end{aligned}
\end{equation}
where $N^K$ is a continuous local martingale (its specific form is irrelevant to us) and $L^0(M)$ is $M$'s local time at zero.\footnote{Because $M$ is a continuous local martingale, its left, right, and symmetrical local times are all identical.}  Inserting $\Delta$ from \eqref{eq:counter-profiles} into the dynamics in \eqref{dMdD} produces
$$
\dd D_t=D_t^\beta h(\frac {M_t}{D_t})\,\dd B_t,\quad  
t<\tau\wedge\rho.
$$
Based on these dynamics, It\^o's formula gives
\begin{equation}\label{eq:counter-D-power}
    \dd D_t^{1-2\beta}
    =\dd N_t^D
      -\beta(1-2\beta)
       \frac{h(\frac {M_t}{D_t})^2}{D_t}\,\dd t,\quad  
t<\tau\wedge\rho,
\end{equation}
where $N^D$ is another continuous local martingale (its specific form is irrelevant for us).

By \eqref{eq:counter-Gamma-bound} and \eqref{eq:counter-K-bound}, we can
choose $\varepsilon>0$ so small that,  
\[
\forall |M|\le1\forall D\in (0,1]:   \quad  \varepsilon K(M,D)\le\frac12D^{1-2\beta}
    \quad \text{and}\quad
    \varepsilon\Gamma(M,D)^2\le\beta(1-2\beta).
\]
We define the Lyapunov function
\[
    V(M,D):=D^{1-2\beta}-\varepsilon K(M,D),\quad |M|\le1, \quad 0<D\le1.
\]
The dynamics \eqref{eq:counter-K-Ito} and \eqref{eq:counter-D-power} produce a continuous local martingale $N$ (its specific form is irrelevant for us) such that
\begin{align*}
    \dd V(M_t,D_t) &=\dd N_t+\Big(\frac\varepsilon2\Gamma(M_t,D_t)^2-\beta(1-2\beta)\Big)
       \frac{h(\frac {M_t}{D_t})^2}{D_t}\,\dd t -\frac{\varepsilon\|h\|_{L^2(\R)}^2}{2}\,\dd L_t^0(M)\\
    &\le\dd N_t
       -\frac{\beta(1-2\beta)}2
        \frac{h(\frac {M_t}{D_t})^2}{D_t}\,\dd t-\frac{\varepsilon\|h\|_{L^2(\R)}^2}{2}
         \,\dd L_t^0(M) \\
             &\le\dd N_t
      -\frac{\varepsilon\|h\|_{L^2(\R)}^2}{2}
         \,\dd L_t^0(M),\quad
t<\tau\wedge\rho.
\end{align*}
To control the local time term, we define the stopping time
\begin{align}\label{stopping_lambda}
    \lambda:=\inf\{t\ge0:L_t^0(M)\ge1\}.
\end{align}
For $n>\frac1d$, $n\in \N$, we define the stopping times
\[
    \nu_n:=\inf\{t\ge0:D_t\le\frac1n\},
    \qquad
    S_n:=\lambda\wedge\rho\wedge n\wedge\nu_n,\quad n\in\N.
\]
Since \(n>\frac1d\), we have \(D_0=d>\frac1n\). By $D$'s continuity, all $D$'s 
paths that reach \(0\) must first reach \(\frac1n\) and so
$$
 \nu_n\le\tau,\quad     S_n\le\tau\wedge\rho\wedge n,\quad \text{a.s.}
$$
On the stochastic interval \(t\in [0,S_n]\), we have
\[
    |M_t|\le1,
    \qquad
    \frac1n\le D_t\le1,
    \qquad
    t\le n.
\]
Because all coefficients in the local-martingale part $N$ of $V$ are
bounded on \([0,S_n]\), the stopped local martingale is a square
integrable martingale with mean zero. Because
$M_0=0$, $D_0=d$, and $K(0,d)=0$ (from $K$'s representation in \ref{K_rep1}), taking expectations gives
\[
\E\!\left[V(M_{S_n},D_{S_n})\right]
\le
 d^{1-2\beta}-\frac{\varepsilon\|h\|_{L^2(\R)}^2}{2}
    \E L_{S_n}^0(M).
\]
Because $V\ge \frac12D^{1-2\beta}\ge0$, the left-hand side is nonnegative and so discarding it gives 
\begin{align}\label{localtime1}
    \frac{\varepsilon\|h\|_{L^2(\R)}^2}{2}
    \E L_{S_n}^0(M)
    \le d^{1-2\beta}.
\end{align}
The continuity of $D$ gives $\lim_n \nu_n= \tau$ a.s., and so since $L^0(M)$ is continuous and nondecreasing, we have
\[
   L_{S_n}^0(M)
    \uparrow
    L_{\lambda\wedge\rho\wedge\tau}^0(M),\quad \text{almost surely as}\quad n\to\infty.
\]
Applying the Monotone Convergence Theorem in \eqref{localtime1} produces
\begin{align}\label{localtime2}
    \frac{\varepsilon\|h\|_{L^2(\R)}^2}{2}
    \E L_{\lambda\wedge\rho\wedge\tau}^0(M)
    \le d^{1-2\beta}.
\end{align}
By continuity of $L^0(M)$, on $\{\lambda<\infty\}$ we have
$L_\lambda^0(M)=1$. Hence, for $\omega \in \{\lambda<\tau\wedge\rho\}$, we have
\[
    \mathbf 1_{\{\lambda<\tau\wedge\rho\}}(\omega)
    =1 = L_\lambda^0(M)(\omega)= 
    L_{\lambda\wedge\rho\wedge\tau}^0(M)(\omega).
\]
Consequently, the inequality in \eqref{localtime2} gives the bound
\begin{equation}\label{eq:counter-lambda-bound}
    \P(\lambda<\tau\wedge\rho)
    \le
    \frac{2d^{1-2\beta}}
         {\varepsilon\|h\|_{L^2(\R)}^2}.
\end{equation}

\noindent{\bf Step 4/5:} The process $M$ has dynamics in \eqref{dMdD} with diffusion coefficient
$\Sigma(M_t,D_t)$. The definition \eqref{eq:Sigma-def} ensures $\Sigma \ge 1$ for all $M\in \R$ and $D\ge0$ because  both $\sigma_+,\sigma_{\mathrm{sym}}\ge 1$. Hence, $\langle M\rangle_t\ge t$ and Dambis--Dubins--Schwarz gives a Brownian motion $W$ such that $M_t=W_{\langle M\rangle_t}$. We start by proving
\begin{align}\label{Localtimeprop1} 
    L_t^0(M)=L_{\langle M\rangle_t}^0(W),\quad t\ge0.
\end{align}
The stochastic time-change formula gives
\[
    \int_0^t\operatorname{sgn}(M_s)\,\dd M_s
    =
    \int_0^{\langle M\rangle_t}\operatorname{sgn}(W_u)\,\dd W_u,\quad t\ge0.
\]
By comparing the two Tanaka formulas
\[
    |M_t|
    =
    \int_0^t\operatorname{sgn}(M_s)\,\dd M_s
    +
    L_t^0(M),\quad 
    |W_{\langle M\rangle_t}|
    =
    \int_0^{\langle M\rangle_t}\operatorname{sgn}(W_u)\,\dd W_u
    +
    L_{\langle M\rangle_t}^0(W),
\]
and using \(M_t=W_{{\langle M\rangle_t}}\), we obtain \eqref{Localtimeprop1}.

Next, we define the stopping times
\[
    \eta:=\inf\{t\ge0:|M_t|\ge1\},
    \qquad
    \tau_W:=\inf\{u\ge0:|W_u|=1\}.
\]
While $\P(\tau_W<\infty) =1$ is well-known, we will prove $\P(\eta <\infty)=1$. Because $\langle M\rangle$ is continuous and strictly increasing with range $[0,\infty)$, its inverse is well-defined and is denoted $\langle M\rangle^{-1}$. 
For $t<\langle M\rangle^{-1}_{\tau_W}$, we  have \(\langle M\rangle_t<\tau_W\), and hence
\[
    |M_t|=|W_{\langle M\rangle_t}|<1,\quad t<\langle M\rangle^{-1}_{\tau_W}, \quad     |M_{\langle M\rangle^{-1}_{\tau_W}}|
    =
    |W_{\tau_W}|
    =
    1.
\]
Consequently, $\eta = \langle M\rangle^{-1}_{\tau_W} $ and so $\P(\tau_W<\infty) =1$ implies $\P(\eta <\infty)=1$.  Because  $\eta = \langle M\rangle^{-1}_{\tau_W} $, we have $\langle M\rangle_\eta=\tau_W$ and consequently \eqref{Localtimeprop1} gives
\begin{align}\label{Localtimeprop1111} 
    L_\eta^0(M)
    =
    L_{\langle M\rangle_\eta}^0(W)
    =
    L_{\tau_W}^0(W).
\end{align}

Next, we prove the set identity
\begin{align}\label{setinclusion4}
    \{\lambda<\eta\}
=
    \bigl\{L_\eta^0(M)\ge1\bigr\},
    \qquad \text{a.s.}
\end{align}
Because  $\P(\eta <\infty)=1$, it suffices to consider $\omega \in (\eta <\infty)$ arbitrary when proving  \eqref{setinclusion4}. We first assume \(\lambda(\omega)<\eta(\omega)\). Continuity and non-decreasingness of local time give
\[
    L_{\eta(\omega)}^0(M)(\omega)\ge    L_{\lambda(\omega)}^0(M)(\omega)=1.
\]
Second, we assume $L_{\eta(\omega)}^0(M)(\omega)\ge1$. The continuity of  \(M\) gives \(|M_{\eta}(\omega)|=1\) and so the existence of \(\delta(\omega)>0\) such that \(M_t(\omega)\neq0\) for all 
\(t\in[\eta(\omega)-\delta(\omega),\eta(\omega)]\) follows. Because \(L^0(M)\) is continuous and  \(\dd L_t^0(M)\) is supported on $\{t\ge0:M_t=0\}$, the local time is
constant on $[\eta(\omega)-\delta(\omega),\eta(\omega)]$, that is,
\[
    L_{\eta(\omega)-\delta(\omega)}^0(M)(\omega)
    =
    L_{\eta(\omega)}^0(M)(\omega)
    \ge1.
\]
Because $\lambda$ from \eqref{stopping_lambda} is $L^0(M)$'s first time hitting 1, we get
\[
    \lambda(\omega)
    \le\eta(\omega)-\delta(\omega)
    <\eta(\omega).
\]

The proof of Proposition~27.2 in Bass (2011) shows  $L_{\tau_W}^0(W)\sim \text{Exp(1)}$. 
For completeness, we reprove  $L_{\tau_W}^0(W)\sim \text{Exp(1)}$ in Appendix \ref{Bass}. 
Because \(L_{\tau_W}^0(W)\) has an exponential distribution, it  has no atom at \(1\), and we get
\begin{equation}\label{eq:counter-local-time-probability}
    \P(\lambda<\eta)
    = \P  \big(L_\eta^0(M)\ge1\big) = 
    \P\!\left(L_{\tau_W}^0(W)\ge1\right)
    =    \P\!\left(L_{\tau_W}^0(W)>1\right) = 
    e^{-1},
\end{equation}
where the first equality follows from \eqref{setinclusion4} and the second uses \eqref{Localtimeprop1111}.

Comparison and the dynamics in \eqref{dMdD} ensure that \(D\) is a nonnegative local martingale, hence, $D$ is a
supermartingale.  Ville's maximal inequality therefore gives
\[
    \P\Big(\sup_{t\in[0,\infty)}D_t\ge1\Big)\le  D_0
    =
d.
\]
The inequality $\P(A \cap B^c) \ge \P(A) -\P(B)$  gives
\[
    \P(\mathcal E)
    \ge
    \P\!\left(
        \{\lambda<\eta\}
        \cap
        \left\{\sup_{t\in[0,\infty)}D_t<1\right\}
    \right)
    \ge e^{-1}-d,
    \quad
    \mathcal E
    :=
    \{\lambda<\eta\}
    \cap
    \left\{
        \sup_{t\in[0,\lambda]}D_t<1
    \right\}.
\]

To conclude the proof, we need the following two set inclusions
\begin{align}\label{inclusions1}
 \mathcal E\cap\{\tau\le\lambda\}&\subseteq\{\tau<\rho\},
 &
 \mathcal E\cap\{\tau>\lambda\}
 &\subseteq\{\lambda<\tau\wedge\rho\}.
\end{align}
To see the first set inclusion in \eqref{inclusions1}, we first let $\omega \in \mathcal E$ be arbitrary. Then, the first part of $\sE$ gives $\lambda(\omega)<\eta(\omega)$ and so $\lambda(\omega)<\infty$. Using also the second part of \(\mathcal E\) gives
$$ |M_t(\omega)|<1 \quad\text{and}\quad D_t(\omega)<1, \qquad t\in[0,\lambda(\omega)] \subset [0,\eta(\omega)). 
$$
Therefore, the definition of $\rho$ in \eqref{taurho} gives $ \lambda(\omega)<\rho(\omega). $ When we further have  $\omega \in \mathcal E\cap\{\tau\le\lambda\}$, it also holds that $ \tau(\omega)\le\lambda(\omega)<\rho(\omega)$ and the first inclusion in \eqref{inclusions1} holds. To see the second set inclusion in \eqref{inclusions1}, we let $\omega \in   \mathcal E\cap\{\tau>\lambda\}$ be arbitrary. Because $\omega \in \sE$, we have
$$
 |M_t(\omega)|<1,\qquad D_t(\omega) <1,\qquad t \in [0,\lambda(\omega)],
$$
Therefore, the definition of $\rho$ in \eqref{taurho} gives  $\lambda(\omega)<\rho(\omega)$. Trivially,  $\omega \in   \mathcal E\cap\{\tau>\lambda\}$ gives $\lambda(\omega)<\tau(\omega)$ and so the minimum $\tau(\omega)\land \rho(\omega)$ is strictly bigger than $\lambda(\omega)$ and the second inclusion in \eqref{inclusions1} follows.

The equality $\P(A \cap B^c) = \P(A) -\P(A \cap B)$, the set inclusions \eqref{inclusions1}, and the bound in \eqref{eq:counter-lambda-bound}    give
\[
\begin{aligned}
    \mathbb P(\tau<\rho)
    &\ge
    \mathbb P\bigl(\mathcal E\cap(\tau\le\lambda)\bigr)\\
    &=
    \mathbb P(\mathcal E)     -  \P\bigl(\mathcal E\cap(\tau>\lambda)\bigr)\\
    &\ge 
        \mathbb P(\mathcal E)-     \mathbb P(\lambda<\tau\wedge\rho)\\
        &\ge e^{-1}-d-
    \frac{2d^{1-2\beta}}
         {\varepsilon\|h\|_{L^2(\mathbb R)}^2}.
\end{aligned}
\]
This lower bound is positive for all sufficiently small $d>0$ ( $\varepsilon$ does not depend on $d$).

\noindent{\bf Step 5/5:}
On $\{\tau<\rho\}$, for every $t\le\tau$,
\[
 |X_t^{\pm \frac d2}|
 \le |M_t|+\frac12D_t<\frac32.
\]
Hence, up to $\tau$, the paths remain in the region where the
truncated and untruncated coefficients coincide. By pathwise
uniqueness, the  solutions corresponding to the truncated and untruncated coefficients coincide up to $\tau$.
Therefore, the untruncated equations also have a finite-time collision
with positive probability.
\end{proof}

\subsection{Consequences for subquadratic Sobolev-flow results}

First, we discuss  the exponent issue in Zhang (2011).
When specialized to the
zero-drift time-homogeneous SDE \eqref{eq:intro-sde} in one dimension, the Sobolev integrability
condition  in Theorem 1.1  in Zhang (2011) becomes
\[
    \sigma'\in
   L^p(\mathbb R), 
    \qquad
    p\in(1,\infty).
    \]

Fix $p\in(1,2)$ arbitrary and choose $\beta$ such that $1-\frac1p<\beta<\frac12$. The coefficient $\sigma_\text{sym}$ in \eqref{both_trunc} is bounded, uniformly continuous, uniformly
bounded away from zero, and $\sigma_\text{sym}'\in L^p(\R)$. This implies that  $\sigma_\text{sym}$ satisfies the regularity assumptions of Theorem~1.1 in 
Zhang (2011). However, this conflicts with our Theorem~\ref{thm:main}, which ensures that  there are $x<y$ such that the stopping time
\[
    \tau:=\inf\{t\ge0:X_t^x=X_t^y\},
\]
satisfies $\P(\tau<\infty)>0$.  Hence, for some deterministic
$N\in\N$, it must be $ \P(\tau\le N)>0$. Pathwise uniqueness implies that the two solutions remain equal after
their first meeting, and consequently
\[
    \P(X_N^x=X_N^y)>0.
\]
Therefore,  the map
\[
  \R\ni  z\longmapsto X_N^z
\]
fails to be injective with positive probability.  This contradicts the conclusion of Theorem~1.1 in Zhang (2011) that, for each $t\ge0$, $z\mapsto X_t^z$ is almost surely a homeomorphism.

Second, we discuss Theorem~5.1(1) of Ren and Zhang (2024). Fix
\(p\in(1,2)\) and choose $\beta \in (1-\frac1p,\frac12)$  such that \eqref{W1pmembership} holds. 
 We use the truncated coefficient  \(\sigma:=\sigma_{\mathrm{sym}}\) in
\eqref{both_trunc} to define the Lamperti map
\[
   F(x):=\int_0^x\frac{\dd u}{\sigma(u)}, \quad x\in \R.
\]
Since \(\sigma\) is bounded above and bounded away from zero,
\(F\) is a strictly increasing global Lipschitz \(C^1\)-bijection with also $F^{-1}$ globally Lipschitz.
Moreover we have
\[
   F''(x)=-\frac{\sigma'(x)}{\sigma(x)^2},\quad x\in \R\setminus\{-2,0,2\}.
\]
Because \(F'(x)\sigma(x)=1\) and
\(F''(x)\sigma(x)^2=-\sigma'(x)\) for $x\in \R\setminus\{-2,0,2\}$, the generalized It\^o formula applied to
\(Y_t:=F(X_t)\) gives the dynamics in \eqref{Lamperti_Dynamics}. To see  \(b \in L^p(\mathbb R)\), 
the change of variables \(y:=F(x)\) gives
\begin{equation}\label{eq:Lamperti-Lp}
 \int_{\mathbb R}|b(y)|^p\,\dd y
 =
 2^{-p}\int_{\mathbb R}
       \frac{|\sigma'(x)|^p}{\sigma(x)}\,\dd x
 <\infty ,
\end{equation}
where finiteness follows from \(\sigma'(x)=0\) for $x\notin [-2,2]$ whereas \(p(\beta-1)>-1\) gives 
integrability of $ |\sigma'(x)|^p=\beta^p |x|^{p(\beta-1)}$ for $x \in (-2,0)\cup (0,2)$.
 By choosing $q>1$ sufficiently large, we see that $b$  in \eqref{Lamperti_Dynamics} satisfies the LPS condition  \((H^b_{p,q})\) of Ren and Zhang (2024). 

For the coefficients in \eqref{both_trunc}, the Lamperti transformation is reversible. To this end, we define 
$G:= F^{-1}$. It\^o's generalized formula gives that $G(Y^{F(z)}_t)$ satisfies \eqref{eq:intro-sde} and so pathwise uniqueness gives  $X^z_t = G(Y^{F(z)}_t)$ for $z\in \R$.

The unit diffusion coefficient in \eqref{Lamperti_Dynamics}
satisfies both the uniform ellipticity/uniform-continuity assumption
\((H^\sigma_e)\) and the diffusion assumption
\((\widehat H^\sigma_H)\) in Theorem~5.1(1) of Ren and Zhang (2024).
Because \(F\) is injective, we have
\[
 X_t^x=X_t^y
 \quad\Longleftrightarrow\quad
 F(X_t^x)=F(X_t^y)
 \quad\Longleftrightarrow\quad
 Y_t^{F(x)}=Y_t^{F(y)}.
\]
However, our Theorem~\ref{thm:main} contradicts the strict
non-confluence conclusion of Theorem~5.1(1) of Ren and Zhang (2024).

\section{Insufficiency of $C^{\beta}(\R)$ for $\beta \in [\frac12,1)$} \label{sec:critical-holder-counterexample}

For all $\beta\in(0,\frac12)$, the corresponding coefficients
in \eqref{both_trunc} belong to $C^\beta(\mathbb R)$ and fail
strict comparison. We now construct counterexamples with
$C^\beta(\mathbb R)$ regularity for all
$\beta\in[\frac12,1)$. Fix any $\alpha\in(\frac12,1)$ and define the $2\pi$-periodic function (Weierstrass function)
\begin{equation}\label{eq:little-o-f}
    f_\alpha(x):=\sum_{n=0}^\infty 2^{-\alpha n}\cos(2^n x),
    \qquad x\in\mathbb R.
\end{equation}
This function is  nowhere differentiable by Hardy's theorem (because $2^{1-\alpha}>1$). 
Since any function in $W^{1,1}_{\mathrm{loc}}(\mathbb R)$
is locally absolutely continuous (see, e.g., Theorem 7.13 in Leoni, 2009), we see $f_\alpha \notin W^{1,1}_{\mathrm{loc}}(\mathbb R)$.

For a constant $\varepsilon>0$ to be chosen below, we define the coeffient
\begin{equation}\label{eq:critical-sigma}
    \sigma(x):=1+\varepsilon f_\alpha(x),
    \qquad x\in\R.
\end{equation}

\begin{theorem}\label{thm:critical-holder-counterexample} Fix  $\alpha\in(\frac12,1)$.  
There exists $\varepsilon_0>0$ such that, for all
$0<\varepsilon\le\varepsilon_0$, the coefficient
\eqref{eq:critical-sigma} is  bounded, bounded away from
zero, and belongs to $C^\alpha(\R)\subset C^{\frac12}(\R)$. 
For all initial points $z\in \R$, the SDE \eqref{eq:intro-sde} has a global
strong solution and pathwise uniqueness holds. Furthermore, there exist $x<y$ such that
\[
    \mathbb P\!\left(
       X_t^x=X_t^y
       \text{ for some }t<\infty
    \right)>0.
\]
\end{theorem}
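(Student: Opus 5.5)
The plan is to reuse the architecture of the proof of Theorem~\ref{thm:main}. A scale-adapted power of $D$, plus a small corrector, gives a Lyapunov function. It forces the pair $(X^x,X^y)$ to collide before $(M,D)$ leaves a box, with positive probability. The new difficulty is that $\sigma$ has no single cusp: the nondegeneracy of $\Delta$ must come from the multiscale oscillation of $f_\alpha$, averaged over the fast motion of $M$.

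\textbf{Step 1: regularity and well-posedness.} Since $\sum_n 2^{-\alpha n}=(1-2^{-\alpha})^{-1}=:c_\alpha$, choosing $\varepsilon_0 c_\alpha\le\frac12$ gives $\frac12\le\sigma\le\frac32$. The standard dyadic splitting of the series at $2^n\approx |x-y|^{-1}$ shows $f_\alpha\in C^\alpha(\R)$, hence $\sigma\in C^\alpha(\R)\subset C^{\frac12}(\R)$. Then \eqref{LeGall1} holds with $\rho(u)=Cu^\alpha$, since $\int_{0+}u^{-2\alpha}\,\dd u=\infty$ for $\alpha\ge\frac12$; this gives pathwise uniqueness. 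Engelbert--Schmidt and Yamada--Watanabe give a strong solution, and the argument at the start of the proof of Theorem~\ref{thm:general-criterion} (affine scale, Proposition~5.5.22(a) in Karatzas and Shreve) makes it global.

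\textbf{Step 2: the increment structure.} Put $M,D$ as in \eqref{MandD}, with $X_0^{x}=-\frac d2$ and $X_0^y=\frac d2$, so that \eqref{dMdD} holds. The identity
\[
    \Delta(M,D)=-2\varepsilon\sum_{n\ge0}2^{-\alpha n}\sin(2^nM)\sin(2^{n-1}D)
\]
shows that, for $D\in[2^{-k-1},2^{-k}]$, the terms with $n\approx k$ dominate. Thus $\Delta^2\approx\varepsilon^2D^{2\alpha}\,P_k(M,D)$, where $P_k$ is a nonnegative trigonometric expression, $2\pi 2^{-k}$-periodic in $M$, with mean bounded below by some $c>0$ uniformly in $k$. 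The lower bound on the mean comes from Parseval applied to the lacunary sum. I will make this uniform by the exact self-similarity $f_\alpha(2x)=2^\alpha\bigl(f_\alpha(x)-\cos x\bigr)$, which reduces every scale to a single compact family of profiles.

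\textbf{Step 3: the Lyapunov function.} Take $q:=2-2\alpha\in(0,1)$. It\^o's formula gives
\[
    \dd D_t^q=\dd N_t-\tfrac{q(1-q)}2D_t^{q-2}\Delta_t^2\,\dd t .
\]
By Step 2 the drift is $-\tfrac{q(1-q)}2\varepsilon^2P_k(M_t,D_t)$, which is of order one but may vanish at some phases of $M$. To replace $P_k$ by its mean, I add a corrector $\varepsilon^2\psi(M,D)$. Here $\psi$ solves $\frac12\Sigma^2\psi_{MM}\approx P_k-\overline{P_k}$ on each dyadic scale and is glued smoothly across scales. Since the period in $M$ is of order $D$, we get $\psi=O(D^2\cdot D^{q-2})=O(D^q)$, $\psi_M=O(D^{q-1})$ and $\psi_{MM}=O(D^{q-2})$. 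Because $\Delta=O(\varepsilon D^\alpha)$, the cross terms $\psi_{MD}\Sigma\Delta$ and $\psi_{DD}\Delta^2$ carry an extra factor $\varepsilon D^{\alpha-1}\cdot D\ll 1$, so they are negligible. Hence $V:=D^q+\varepsilon^2\psi$ satisfies $\frac12D^q\le V\le 2D^q$ and has drift $\le -c\,\varepsilon^2$ on the box $\{|M|<1,\,0<D<1\}$. This corrector construction, uniformly across scales with the errors between neighboring dyadic blocks controlled, is the step I expect to be the main obstacle. As a first attempt I would define $\psi$ blockwise via the self-similarity and interpolate with a partition of unity in $\log_2 D$.

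\textbf{Step 4: conclusion.} With $\rho$ as in \eqref{taurho} and the localisation $\nu_n$, optional stopping gives $c\varepsilon^2\E[\tau\wedge\rho]\le 2d^q<\infty$, so $\tau\wedge\rho<\infty$ almost surely. I then show that exiting the box before $\tau$ has probability bounded away from one. Ville's inequality gives $\P(\sup_t D_t\ge1)\le d$. For $M$, since $\E[\tau\wedge\rho]\le Cd^q$ and $\Sigma\le\frac32$, Doob's inequality gives $\P(\sup_{t\le\tau\wedge\rho}|M_t|\ge1)\le Cd^q$. Hence $\P(\tau<\rho)\ge 1-d-Cd^q>0$ for small $d$, and taking $x=-\frac d2$, $y=\frac d2$ proves the statement.
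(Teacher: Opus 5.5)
Your Steps 1 and 4 are fine. Step 3, which you rightly flag as the crux, is not established, and the plan for it rests on a false premise.

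\textbf{The periodic structure in Step 2 does not exist.} $\Delta^2$ has no period of order $D$ in $M$. Iterating your self-similarity gives $f_\alpha(x)=\sum_{n<k}2^{-\alpha n}\cos(2^nx)+2^{-\alpha k}f_\alpha(2^kx)$, and only the last piece is $2\pi2^{-k}$-periodic. The partial sum contributes $-2\varepsilon\sum_{n<k}2^{-\alpha n}\sin(2^nM)\sin(2^{n-1}D)$ to $\Delta$. This is of the same order $\varepsilon D^\alpha$; the mode $n=k-1$ alone has that size and twice the period. Moreover, $\Delta^2$ contains Fourier modes of frequency of order one, e.g.\ $\varepsilon^2a_0(D)^2\sin^2M$ with $a_0(D)=2\sin(D/2)$. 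On the true period $2\pi$ these low modes dominate the Poisson solution: in your normalization $\psi$ and $\psi_M$ are both of size $D^{2-2\alpha}$, not $D^2$ and $D$. So there is no compact family of $2\pi2^{-k}$-periodic cell problems to glue.

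\textbf{The corrector's $D$-regularity is unaddressed.} $\Delta$ is only $\alpha$-H\"older in $D$: the termwise $D$-derivative of \eqref{eq:little-o-Delta-fourier} has terms of size $2^{(1-\alpha)n}|\sin(2^nM)\cos(2^{n-1}D)|$ and diverges. Hence your blockwise profiles are in general not differentiable in $D$. A partition of unity in $\log_2D$ only smooths the transitions between blocks. The existence and size of $\psi_{MD}$ and $\psi_{DD}$ must come from the smoothing effect of inverting $\partial_M^2$, which your sketch does not use.

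\textbf{The bounds you state do not close.} $\psi_M=O(D^{q-1})$ means $\psi_{MD}=O(D^{q-2})$, so $\varepsilon^2\psi_{MD}\Sigma\Delta=O(\varepsilon^3D^{-\alpha})$. This swamps the $-c\varepsilon^2$ drift as $D\downarrow0$. Your relative factor $\varepsilon D^\alpha$ is measured against a $\psi_{MM}$ of size $D^{q-2}$, whereas the $\psi_{MM}$ term must be $O(1)$ to cancel $P_k-\overline{P_k}$.

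\textbf{How the paper handles this.} It avoids $D$-derivatives altogether by freezing $D$:
\begin{enumerate}[label=(\roman*)]
\item It solves $\psi_D''=2(q_D-\overline q_D)$ on the full period $2\pi$ and bounds $\|\psi_D\|_\infty\le C_\alpha\varepsilon^2D$ through the Fourier series \eqref{psiD2}; the factors $(2^n\pm2^j)^{-2}$ tame the low modes.
\item It uses this corrector only until $D$ leaves $(D/r,rD)$, with the H\"older stability estimate \eqref{eq:little-o-stability} controlling $q_{D_s}-q_D$.
\item It turns this into $\E\theta_D\le C_{\alpha,\varepsilon}D^{2-2\alpha}$ via It\^o's isometry.
\item It chains the scales by the embedded walk on $x_k=Rr^{-k}$ with the supermartingale $Y_j=x_{K_j}^{2-2\alpha}$.
\end{enumerate}

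\textbf{Repairing your route.} To keep a single Lyapunov function, take $\Phi(M,D):=\frac{q(1-q)}2D^{q-1}\psi_D(M)$ with this full-period corrector. Differentiating \eqref{psiD2} termwise, the same factors $(2^n\pm2^j)^{-2}$ absorb up to two $D$-derivatives of $a_n(D)$. Using $\alpha>\frac12$, one gets $|\Phi|\lesssim\varepsilon^2D^q$, $|\Phi_{MD}|\lesssim\varepsilon^2D^{q-1}$ and $|\Phi_{DD}|\lesssim\varepsilon^2D^{q-2}$. The cross terms are then $O(\varepsilon^3D^{1-\alpha})$ and $O(\varepsilon^4)$, and the $(\Sigma^2-1)$ error is $O(\varepsilon^3)$. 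Hence $V=D^q+\Phi$ has drift at most $-\frac{q(1-q)}2c_\alpha\varepsilon^2+C\varepsilon^3$, uniformly in $M$ for $0<D\le1$. Your Step 4 then goes through; by periodicity the $M$-box is not even needed. This would bypass the paper's exit-time and scale-chaining steps.
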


\begin{proof}
We divide the proof into several steps.

\medskip
\noindent{\bf Step 1/7:} This step uses the properties
\begin{align}\label{sum_prop}
    S_\alpha:=\sum_{n=0}^\infty 2^{-\alpha n}=\frac{2^{\alpha }}{2^{\alpha }-1}<\infty,\quad    \sum_{n=0}^\infty 2^{(1-\alpha) n}=\infty.
\end{align}
Finiteness of the first sum in \eqref{sum_prop} and Weierstrass \(M\)-test ensure the series in \eqref{eq:little-o-f} converges uniformly, and so  $f_\alpha$
is a bounded, continuous function with period $2\pi$, and $\|f_\alpha\|_\infty\le S_\alpha$. 
Furthermore, the series in \eqref{sum_prop} imply that  \(2^{(1-\alpha)n}\) can only be summed over a finite range whereas $2^{-\alpha n}$ is summable. We have the geometric bounds
\begin{align}\label{two_series}
\sum_{n=0}^N 2^{(1-\alpha)n} \le C_\alpha 2^{(1-\alpha)N},\quad \sum_{n=N+1}^\infty 2^{-\alpha n} \le C_\alpha 2^{-N\alpha},\quad N\in\N_0,
\end{align}
where $C_\alpha >0$ is an irrelevant constant that only depends on $\alpha$ and will change throughout the proof.

We define
\begin{align}\label{def_a_A}
a_n(D):=2^{1-\alpha n}\sin(2^{n-1}D),\quad   A_n(D):=\sum_{ j=0}^{n-1}|a_j(D)|,\quad  n\in\N_0,\quad 0<D\le1.
\end{align}
The sequence $(a_n(D))_{n\in\N}$ is absolutely summable with $|| a(D)||_{\ell_1}\le 2S_\alpha$ where $S_\alpha$ is the first sum in \eqref{sum_prop}.  For $D\in(0,1]$, we choose $N=N(D)\ge0$ so that
$1\le2^ND<2$, which gives
\begin{align}\label{choosing_D}
    \frac D2<2^{-N}\le D,
    \qquad
    \frac12\le2^{N-1}D<1.
\end{align}
By combining \eqref{two_series} and \eqref{choosing_D} with $|\sin(u)|\le |u|$ for $n\le N$ and 
  $|\sin(u)|\le 1$ for $n>N$ we get
\begin{align}\label{manybounds}
|a_n(D)|\le
\begin{cases}
D\,2^{(1-\alpha)n}, & n\le N,\\[2mm]
2^{1-\alpha n}, & n>N,
\end{cases}
\qquad
A_n(D)\le
\begin{cases}
C_\alpha D\,2^{(1-\alpha)n}, & n\le N,\\[2mm]
C_\alpha D^\alpha, & n>N.
\end{cases}
\end{align}

To see $f_\alpha\in C^\alpha(\mathbb R)$, we first consider $D:=|x-y|<1$.  For \(n\le N\), the Mean-Value Theorem 
gives   $  |\cos(2^n x)-\cos(2^n y)|     \le 2^nD$ whereas for \(n>N\) we use $    |\cos(2^n x)-\cos(2^n y)|\le2$. The bounds in \eqref{two_series} produce
\[
\begin{aligned}
 |f_\alpha(x)-f_\alpha(y)|
 &\le D\sum_{n=0}^N 2^{(1-\alpha)n}
      +2\sum_{n>N}2^{-\alpha n}  \\
 &\le D C_\alpha \,2^{(1-\alpha)N}
      +C_\alpha 2^{-\alpha N} \\
      &\le C_\alpha D^\alpha,
\end{aligned}
\]
For $|x-y|\ge 1$, we trivially have
\begin{align}\label{boundedness_holder}
 |f_\alpha(x)-f_\alpha(y)|\le 2||f_\alpha||_\infty \le 2||f_\alpha||_\infty |x-y|^\alpha.
\end{align}
Because $\alpha>\frac12$, the global $C^\alpha$ estimate together with boundedness of $\sigma$ implies  --- similarly to \eqref{boundedness_holder} ---  that 
$\sigma\in C^{\frac12}(\mathbb R)$. Set $\varepsilon_0:= \frac1{2S_\alpha}$. For $\varepsilon\in (0,\varepsilon_0]$, $\sigma $ is uniformly positive too  (use \(\|f_\alpha\|_\infty\le S_\alpha\) so that  
 $  \sigma(x)\ge1-\varepsilon S_\alpha$). The value of \(\varepsilon_0\) will be decreased further below if necessary.

Any $C^{\frac12}(\R)$ coefficient satisfies \eqref{NakaoLeGall}, and so
pathwise uniqueness holds for \eqref{eq:intro-sde}. Together with strong
existence and time homogeneity, this gives the strong Markov property.
Furthermore, as already explained at the beginning of the proof of
Theorem~\ref{thm:general-criterion}, the strong solution is global
(i.e., $\xi^z=\infty$ for all $z\in\R$). Because strong existence and
pathwise uniqueness also hold for the time-homogeneous SDE
\eqref{dMdD}, the strong Markov property holds for $(M,D)$ as well.

\noindent\textbf{Step 2/7:} Let $\Delta$ be defined in \eqref{eq:Delta-def} and define
\begin{align}\label{qD_barqD}
    q_D(M):=\frac{\Delta(M,D)^2}{D},
    \quad 
\overline q_D:=\frac1{2\pi}\int_0^{2\pi}q_D(x)\,\dd x,\quad M\in\mathbb R,\quad D\in (0,1],
\end{align}
 and set
\begin{align}\label{beta_in_terms_alpha}
    \beta:=2\alpha-1\in(0,1).
\end{align}
In this step, we prove existence of constants $0<c_\alpha<C_\alpha<\infty$,
independent of $D$ and $\varepsilon$, such that
\begin{equation}\label{eq:little-o-energy}
    c_\alpha\varepsilon^2D^\beta
    \le \overline q_D
    \le C_\alpha\varepsilon^2D^\beta,
    \qquad 0<D\le1,
\end{equation}
and
\begin{equation}\label{eq:little-o-q-sup}
    0\le q_D(M)\le C_\alpha\varepsilon^2D^\beta,
    \qquad M\in\mathbb R,\quad 0<D\le1.
\end{equation}
The definitions of $\Delta$ in  \eqref{eq:Delta-def}, $f_\alpha$ in \eqref{eq:little-o-f}, and the identity
$\cos(a+b)-\cos(a-b)=-2\sin(a)\sin(b)$ give
\begin{equation}\label{eq:little-o-Delta-fourier}
    \Delta(M,D)
    =-2\varepsilon\sum_{n=0}^\infty
       2^{-\alpha n}\sin(2^nM)\sin(2^{n-1}D),\quad M\in\R,\quad 0<D\le1.
\end{equation}
Finiteness of the first sum in \eqref{sum_prop} ensures the series in \eqref{eq:little-o-Delta-fourier} converges absolutely and uniformly
in \((M,D)\in\mathbb R^2\). Because $q_D$ involves squaring $\Delta$,  we use \eqref{def_a_A} to write
\begin{align}\label{def_an}
\Delta(M,D)=-\varepsilon\sum_{n=0}^\infty a_n(D)\sin(2^nM), \quad M\in\mathbb R, \quad 0<D\le 1. 
\end{align}
Because  $|| a(D)||_{\ell_1}<\infty$, we also have
$a(D)\in\ell_2$ and we get

\begin{align}\label{eq:little-o-energy-formula} 
\begin{split}
\overline q_D
&=
\frac{1}{2\pi D}
\int_0^{2\pi}\Delta(M,D)^2\,\dd M\\
&=
\frac{\varepsilon^2}D
\sum_{n,j=0}^\infty
a_n(D)a_j(D)
\frac1{2\pi}
\int_0^{2\pi}
\sin(2^nM)\sin(2^jM)\,\dd M\\
&=\frac{\varepsilon^2}{2D}\sum_{n=0}^\infty a_n(D)^2\\
&= 
\frac{2\varepsilon^2}D
\sum_{n=0}^\infty
2^{-2\alpha n}\sin(2^{n-1}D)^2,\quad 0<D\le 1,
\end{split}
\end{align}
where the second last equality uses the orthogonality
\[
\frac1{2\pi}
\int_0^{2\pi}
\sin(2^nM)\sin(2^jM)\,\dd M
=
\frac12\mathbf 1_{\{n=j\}}.
\]

We start by proving \eqref{eq:little-o-energy}. The inequality $2^{-N}>\frac D2$ from \eqref{choosing_D} gives
\[
2^{-2\alpha N}
=
(2^{-N})^{2\alpha}
\ge
\left(\frac D2\right)^{2\alpha}
=
2^{-2\alpha}D^{2\alpha}.
\]
From  \eqref{choosing_D} we have $2^{N-1}D \in [\frac12,1]$. Since \(\sin\) is increasing on \([0,1]\), we have
\[
\sin(2^{N-1}D)^2
\ge
\sin\!\left(\frac12\right)^2.
\]
Keeping only the $n=N$ term in
\eqref{eq:little-o-energy-formula} gives the lower bound  in \eqref{eq:little-o-energy}

\[
\frac{\overline q_D}{\varepsilon^2}
\ge
\frac2D\,2^{-2\alpha N}
\sin(2^{N-1}D)^2\ge
2^{1-2\alpha}
\sin\!\left(\frac12\right)^2
D^{2\alpha-1}.
\]

The formula in \eqref{eq:little-o-energy-formula}  and the bounds from \eqref{manybounds} give the upper bound in \eqref{eq:little-o-energy}:
\[
\begin{aligned}
\overline q_D
&=
\frac{\varepsilon^2}{2D}
\sum_{n=0}^\infty a_n(D)^2\\
&\le
\frac{\varepsilon^2}{2D}
\left(
D^2\sum_{n=0}^N2^{2(1-\alpha)n}
+
4\sum_{n=N+1}^\infty2^{-2\alpha n}
\right)\\
&\le
C_\alpha\varepsilon^2
\left(
D\,2^{2(1-\alpha)N}
+
D^{-1}2^{-2\alpha N}
\right)\\
&\le
C_\alpha\varepsilon^2D^{2\alpha-1},\quad 0<D\le 1,
\end{aligned}
\]
where the second last inequality uses
$$ 
\sum_{n=0}^N2^{2(1-\alpha)n} = \frac{2^{2(1-\alpha)(N+1)}-1} {2^{2(1-\alpha)}-1} \le C_\alpha 2^{2N(1-\alpha)}
 $$
  and the last inequality uses \eqref{choosing_D}. The upper bound in \eqref{eq:little-o-q-sup} follows similarly from  \eqref{def_an} and the bounds from \eqref{manybounds}:
\[
\begin{aligned}
|\Delta(M,D)|
&\le
\varepsilon\sum_{n=0}^\infty|a_n(D)|\\
&\le
\varepsilon
\left(
D\sum_{n=0}^N2^{(1-\alpha)n}
+2\sum_{n=N+1}^\infty2^{-\alpha n}
\right)\\
&\le
C_\alpha\varepsilon
\left(
D2^{(1-\alpha)N}
+
2^{-\alpha N}
\right)\\
&\le
C_\alpha\varepsilon D^\alpha,
\end{aligned}
\]
where the second last inequality uses \eqref{two_series} and the last inequality uses \eqref{choosing_D}. Consequently, the upper bound in  \eqref{eq:little-o-q-sup} follows
\[
q_D(M)
=
\frac{\Delta(M,D)^2}{D}
\le
C_\alpha\varepsilon^2D^{2\alpha-1}.
\]

\noindent\textbf{Step 3/7:} Fix $D\in(0,1]$ in this step. We define the kernel
\[
    G(t):=\frac{|t|}{2}-\frac{t^2}{4\pi}-\frac{\pi}{6}.
    \qquad t\in[-\pi,\pi],
\]
Because $G(\pi)=G(-\pi)$, we can extend $G$ to \(\mathbb R\) using \(2\pi\) periods. Then, the function
\begin{align}\label{green_poisson}
\psi_D( M):=
    2\int_0^{2\pi}
    G(M-s)\bigl(q_D(s)-\overline q_D\bigr)\,\dd s,\quad M\in \R,
\end{align}
is the unique zero mean $2\pi$-periodic classical solution in $C^2(\R)$ of the Poisson equation\footnote{More generally, distributionally uniqueness among all \(2\pi\)-periodic, mean-zero continuous functions holds for \eqref{eq:little-o-corrector-eq}.}
\begin{equation}\label{eq:little-o-corrector-eq}
    \psi_D''(M)=2\bigl(q_D(M)-\overline q_D\bigr),\quad M\in \R.
\end{equation}

This step is devoted to proving
\begin{equation}\label{eq:little-o-corrector-bound}
    \|\psi_D\|_\infty\le C_\alpha\varepsilon^2D,\quad D\in (0,1].
\end{equation}
When squaring $\Delta (M,D)$ in \eqref{qD_barqD}, we will use 
$    \sin(a)^2=\frac12-\frac12\cos(2a)$ for the diagonal terms and 
$
    2\sin a\sin b=\cos(a-b)-\cos(a+b)
$
for the off-diagonal terms. Because $a(D)\in \ell_1$, we insert the terms from  \eqref{qD_barqD} to rewrite the right-hand side of \eqref{eq:little-o-corrector-eq} as
\begin{align}\label{star1}
\begin{split}
2\bigl(q_D(M)-\overline q_D\bigr)
=
\frac{\varepsilon^2}{D}\Bigg[
&-\sum_{n=0}^\infty
 a_n(D)^2\cos(2^{n+1}M)\\
&+2\sum_{0\le j<n}
 a_n(D)a_j(D)
 \cos\bigl((2^n-2^j)M\bigr)\\
&-2\sum_{0\le j<n}
 a_n(D)a_j(D)
 \cos\bigl((2^n+2^j)M\bigr)
\Bigg].
\end{split}
\end{align}
Because $a(D)\in\ell_1$, these series are absolutely and uniformly
convergent. By using the integral identity
\begin{equation}\label{green_cosine}
    \int_0^{2\pi}G(M-s)\cos(ks)\,\dd s
    =
    -\frac{\cos(kM)}{k^2},
    \qquad k\in \N,
\end{equation}
when inserting \eqref{star1} into the right-hand side of \eqref{green_poisson}, we see 
\begin{align}\label{psiD2}
\begin{split}
\psi_D(M)
=
\frac{\varepsilon^2}{D}\Bigg[
&\sum_{n=0}^\infty
 \frac{a_n(D)^2}{(2^{n+1})^2}\cos(2^{n+1}M)\\
&-2\sum_{0\le j<n}
 \frac{a_n(D)a_j(D)}
 {(2^n-2^j)^2}
 \cos\bigl((2^n-2^j)M\bigr)\\
&+2\sum_{0\le j<n}
 \frac{a_n(D)a_j(D)}
 {(2^n+2^j)^2}
 \cos\bigl((2^n+2^j)M\bigr)
\Bigg].
\end{split}
\end{align}
To see that \eqref{psiD2} satisfies the bound \eqref{eq:little-o-corrector-bound}, we note for  \(0\le j<n\), we trivially have $2^n-2^j\ge2^{n-1}$. Therefore, 
\[
    \frac1{(2^n-2^j)^2}
    \le4\,2^{-2n},
    \quad
    \frac1{(2^n+2^j)^2}
    \le2^{-2n},\quad     \frac1{(2^{n+1})^2}\le2^{-2n},\quad 0\le j<n.
\]
Consequently, by using $A_n(D)$ from \eqref{def_a_A},  we can bound \eqref{psiD2} as 
\begin{align}\label{green_direct_bound}
    \|\psi_D\|_\infty
    \le
    \frac{C\varepsilon^2}{D}
    \sum_{n=0}^\infty
    2^{-2n}
    \bigl(a_n(D)^2+|a_n(D)|A_n(D)\bigr).
\end{align}
By using $N=N(D)$ from \eqref{choosing_D}, we can split into two sums. First, the bound $\sum _{n=0}^N 2^{-2\alpha n} \le \frac{4^\alpha}{4^\alpha-1}$ and  \eqref{manybounds} give
\begin{align*}
    \frac{C\varepsilon^2}{D}
    \sum_{n=0}^N
    2^{-2n}
    \bigl(a_n(D)^2+|a_n(D)|A_n(D)\bigr)
&\le     C_\alpha\varepsilon^2 D
    \sum_{n=0}^{N}2^{-2n}
\bigl(2^{2(1-\alpha)n}+C_\alpha 2^{2(1-\alpha)n}\bigr)\\
&\le
C_\alpha\varepsilon^2 D.
\end{align*}
Then, because $ \sum _{n=N+1}^{\infty } 2^{-2n (1+\alpha ) }=\frac{2^{-2 (1+\alpha) N}}{2^{2 \alpha +2}-1}$ and 
$ \sum _{n=N+1}^{\infty } 2^{-n (2+\alpha ) } = \frac{2^{-(2+\alpha )N}}{2^{\alpha +2}-1}$,  the bound in  \eqref{green_direct_bound} and  \eqref{manybounds} give
\begin{align*}
    \|\psi_{D}\|_\infty
    &\le C_\alpha \varepsilon^2   \Big(   D+\frac1D
    \sum_{n=N+1}^{\infty}\bigl(2^{2-2n(1+\alpha)}+2^{1-n(2+\alpha)}C_\alpha D^\alpha\bigr)\Big)\\
&\le C_\alpha\varepsilon^2   \Big(  D+\frac1DC_\alpha 2^{-2 (1+\alpha ) N}+2^{-(2+\alpha )N}C_\alpha D^{\alpha-1}\Big)\\
&\le C_\alpha\varepsilon^2   \Big(  D+ D^{1+2\alpha}\Big),
\end{align*}
where the last inequality uses  \eqref{choosing_D}. Because $D\in (0,1]$, we get \eqref{eq:little-o-corrector-bound}.

\medskip
\noindent\textbf{Step 4/7:}
Let $1<r\le2$.  For
\[
    \frac Dr\le \widetilde D\le rD,
    \qquad 0<D\le\frac1r,
\]
we claim
\begin{equation}\label{eq:little-o-stability}
    \sup_{M\in\mathbb R}
    |q_{\widetilde D}(M)-q_D(M)|
    \le C_\alpha\varepsilon^2D^{2\alpha-1}(r-1)^\alpha.
\end{equation}
To see \eqref{eq:little-o-stability},  the definition of $\Delta$ 
in \eqref{eq:Delta-def} gives
\begin{align}\label{alphaest1} 
\begin{split}
\Delta(M,\widetilde D)-\Delta(M,D) ={}&\varepsilon  \Big[ f_\alpha(M+
\widetilde D/2)-f_\alpha(M+D/2)\\ &\qquad\qquad -f_\alpha(M-\widetilde D/2)+f_\alpha(M-D/2) \Big]. 
\end{split}
\end{align}
The $\alpha$-H\"older estimate for $f_\alpha$  in Step 1 gives
$$ \left|f_\alpha(M+\widetilde D/2)-f_\alpha(M+D/2)\right| \le C_\alpha\left(\frac{|\widetilde D-D|}{2}\right)^\alpha, $$
and similarly for the other terms in \eqref{alphaest1}. This gives the uniform (in $M\in \R)$ $\alpha$-H\"older estimate 
\begin{align*}
 |\Delta(M,\widetilde D)-\Delta(M,D)|
 &\le C_\alpha\varepsilon|\widetilde D-D|^\alpha.
 \end{align*}
 Because \(\widetilde D\le rD\le2D\), the $\alpha$-H\"older estimate for $f_\alpha$  in Step 1 also gives
$$ \begin{aligned} |\Delta(M,\widetilde D)|+|\Delta(M,D)| &\le \varepsilon C_\alpha \bigl(\widetilde D^\alpha+D^\alpha\bigr)\le \varepsilon C_\alpha (2^\alpha+1)D^\alpha\le C_\alpha\varepsilon D^\alpha,
\end{aligned} $$
where the last inequality uses an enlarged \(C_\alpha\). These bounds, $a^2-b^2 = (a-b)(a+b)$, and the definition of $q_D$ in \eqref{qD_barqD} produce
\[
\begin{aligned}
 |q_{\widetilde D}(M)-q_D(M)|
 &\le
 \frac{|\Delta(M,\widetilde D)-\Delta(M,D)|
       (|\Delta(M,\widetilde D)|+|\Delta(M,D)|)}{\widetilde D}\\
 &\quad
 +\Delta(M,D)^2
   \left|\frac1{\widetilde D}-\frac1D\right|\\
 &\le C_\alpha\varepsilon^2\Big( \frac{|\widetilde D-D|^\alpha}{\widetilde D}D^\alpha + D^{2\alpha} \frac{|D-\widetilde D|}{\widetilde D D}\Big)\\
  &\le
 C_\alpha\varepsilon^2 \bigl((r-1)^\alpha D^{2\alpha -1}  r+(r-1) D^{2\alpha -1}  r\bigr),
\end{aligned}
\]
where the  last inequality uses $|\widetilde D-D| \le (r-1)D$ and $\frac1{\widetilde D}\le \frac rD$.
Since $r\in(1, 2]$, $0<r-1\le1$, and $\alpha<1$, we have
$r(r-1)\le 2(r-1)^\alpha$, which proves \eqref{eq:little-o-stability}.

Enlarging $C_\alpha$ if necessary, we take the same
$C_\alpha$ so that all the upper bounds in
\eqref{eq:little-o-energy}, \eqref{eq:little-o-q-sup}, and
\eqref{eq:little-o-stability} hold simultaneously. Recall that $c_\alpha:= 2^{1-2\alpha}
\sin\!\left(\frac12\right)^2$ is the constant in the lower bound in \eqref{eq:little-o-energy}.  We choose \(r_0\in(1,2]\) sufficiently close to \(1\) that
\begin{equation}\label{eq:little-o-r-choice}
    C_\alpha(r-1)^\alpha\le\frac{c_\alpha}{4},\quad r\in (1,r_0),
\end{equation}
and then we  fix such \(r\in(1,r_0)\). The definition of $\Sigma$  in \eqref{eq:Sigma-def} and $\|f_\alpha\|_\infty\le S_\alpha$ produce 
\[
    |\Sigma(M,D)-1|\le \frac \varepsilon2\Big| f_\alpha (M+\frac D2) +  f_\alpha (M-\frac D2)\Big| \le \varepsilon S_\alpha,
    \quad M\in\mathbb R,\quad D\ge0.
\]
Because $a^2-1 = 2(a-1) + (a-1)^2$, we get the uniform bound
\[
 \sup_{M\in\mathbb R,D\ge0}|\Sigma(M,D)^2-1|
 \le 2\varepsilon S_\alpha+\varepsilon^2S_\alpha^2.
\]
In addition to $\varepsilon_0$ being such that $\sigma $ is uniformly positive, if needed, we can lower $\varepsilon_0$  such that 
\begin{equation}\label{eq:little-o-eps-choice}
\big(2\varepsilon S_\alpha+\varepsilon^2S_\alpha^2\big) C_\alpha
    \le\frac{c_\alpha}{4},
    \qquad 0<\varepsilon\le\varepsilon_0.
\end{equation}

\noindent\textbf{Step 5/7:} Fix $M\in\mathbb R$, $r\in (1,r_0)$, $0<D\le1/r$, and consider the processes $(M_t,D_t)$  in \eqref{dMdD} with
$M_0=M$ and $D_0=D$, and define the stopping time
\begin{align}\label{theta_D}
    \theta_D
    :=\inf\left\{t\ge0:D_t\notin\left(\frac Dr,rD\right)\right\}.
\end{align}

This step proves
\begin{equation}\label{eq:little-o-exit}
    \mathbb E[\theta_D]
    \le
   C_{\alpha,\varepsilon}
    D^{2-2\alpha},\quad C_{\alpha,\varepsilon}:=  \frac{2r(r-1)^2+4C_\alpha\varepsilon^2}
         {c_\alpha\varepsilon^2}.
\end{equation}
Let $\psi_D$ be as in \eqref{green_poisson}. Because \(\psi_D\in C^2(\mathbb R)\) and is periodic, 
\(\psi_D'\) is uniformly bounded. Since  also \(\Sigma\) in \eqref{eq:Sigma-def} is uniformly bounded, the
stochastic integral below,  is a square
integrable martingale with mean zero on any finite time interval. For a constant $T\in(0,\infty)$, It\^o's lemma applied to \(M\mapsto\psi_D(M)\) gives
\[
\begin{aligned}
\psi_D(M_{\theta_D\wedge T})-\psi_D(M_0)
={}&
\int_0^{\theta_D\wedge T}
\psi_D'(M_s)\Sigma(M_s,D_s)\,\dd B_s\\
&+\frac12\int_0^{\theta_D\wedge T}
\psi_D''(M_s)\Sigma(M_s,D_s)^2\,\dd s.
\end{aligned}
\]
Using the dynamics \eqref{dMdD} and the Poisson ODE \eqref{eq:little-o-corrector-eq}, we obtain
\[
\begin{aligned}
 \mathbb E\int_0^{\theta_D\wedge T}q_D(M_s)\,\dd s
 ={}&\overline q_D\,\mathbb E[\theta_D\wedge T]
 +\mathbb E[\psi_D(M_{\theta_D\wedge T})-\psi_D(M_0)]\\
 &-\mathbb E\int_0^{\theta_D\wedge T}
   (\Sigma(M_s,D_s)^2-1)
   (q_D(M_s)-\overline q_D)\,\dd s.
\end{aligned}
\]
We will bound each term on the right-hand side. From \eqref{eq:little-o-energy}, we have
$$ 
\bar q_D\,\mathbb E[\theta_D\wedge T]\ge c_\alpha\varepsilon^2D^\beta\mathbb E[\theta_D\wedge T]. 
$$
From \eqref{eq:little-o-corrector-bound}, we have
$$ 
\mathbb E[\psi_D(M_{\theta_D\wedge T})-\psi_D(M_0)] \ge-2\|\psi_D\|_\infty \ge-2C_\alpha\varepsilon^2D. 
$$
From \eqref{eq:little-o-eps-choice} and the upper bounds in \(\eqref{eq:little-o-energy}\) and \(\eqref{eq:little-o-q-sup}\), we have
$$
\left| (\Sigma(M_s,D_s)^2-1) (q_D(M_s)-\bar q_D) \right|\le \bigl(2\varepsilon S_\alpha+\varepsilon^2S_\alpha^2\bigr) C_\alpha\varepsilon^2D^\beta\le \frac{c_\alpha}{4}\varepsilon^2D^\beta. 
$$
After integrating the last estimate, we can combine with the prior two estimates to see
\begin{align}\label{estimateforD}
 \mathbb E\int_0^{\theta_D\wedge T}q_D(M_s)\,\dd s
 \ge \frac{3c_\alpha}{4}\varepsilon^2D^\beta
       \mathbb E[\theta_D\wedge T]
      -2C_\alpha\varepsilon^2D.
\end{align}

The lower bound in \eqref{estimateforD} works for the initial value $D=D_0$. Next, we allow for also later values $D_s$. The definition of \(\theta_D\) in \eqref{theta_D}  gives that 
\begin{align}\label{rD_sbound}
    \frac Dr\le D_s\le rD,
    \qquad 0\le s\le\theta_D\wedge T,
\end{align}
Hence, \eqref{eq:little-o-stability} with \(\widetilde D=D_s\) and 
with \eqref{eq:little-o-r-choice} yield
\[
    q_{D_s}(M_s)
    \ge 
    q_D(M_s)- C_\alpha\varepsilon^2D^\beta(r-1)^\alpha\ge 
    q_D(M_s)-\frac{c_\alpha}{4}\varepsilon^2D^\beta .
\]
Therefore, the bound in \eqref{estimateforD} gives
\begin{align}\label{eq:little-o-qtime}
\begin{split}
 \mathbb E\int_0^{\theta_D\wedge T}q_{D_s}(M_s)\,\dd s
 &\ge
 \mathbb E\int_0^{\theta_D\wedge T}q_D(M_s)\,\dd s
 -\frac{c_\alpha}{4}\varepsilon^2D^\beta
       \mathbb E[\theta_D\wedge T]  \\
 &\ge \frac{c_\alpha}{2}\varepsilon^2D^\beta
       \mathbb E[\theta_D\wedge T]
      -2C_\alpha\varepsilon^2D.
\end{split}
\end{align}
On the other hand, because $\Delta$ defined in \eqref{eq:Delta-def} satisfies
\[
    \mathbb E\int_0^{\theta_D\wedge T}
    \Delta(M_s,D_s)^2\,\dd s
    \le
    4\|\sigma\|_\infty^2T<\infty,
\]
we can use It\^o's isometry to see
\begin{align}\label{Ito_Isometry_bound1}
\begin{split}
\mathbb E[(D_{\theta_D\wedge T}-D)^2]
&=
\mathbb E\int_0^{\theta_D\wedge T}
\Delta(M_s,D_s)^2\,\dd s\\
&=\mathbb E\int_0^{\theta_D\wedge T}
     D_s q_{D_s}(M_s)\,\dd s\\
     &\ge \frac Dr\mathbb E\int_0^{\theta_D\wedge T}
 q_{D_s}(M_s)\,\dd s,
\end{split}
\end{align}
where the inequality uses \eqref{rD_sbound}. Using \eqref{rD_sbound} also gives $|D_{\theta_D\wedge T}-D|\le(r-1)D$, and so from
\eqref{eq:little-o-qtime} and \eqref{Ito_Isometry_bound1} we get
\[
\begin{aligned}
 (r-1)^2D^2
 &\ge \frac Dr
 \left[
   \frac{c_\alpha}{2}\varepsilon^2D^\beta
      \mathbb E[\theta_D\wedge T]
   -2C_\alpha\varepsilon^2D
 \right].
\end{aligned}
\]
Rearranging gives
\[
\mathbb E[\theta_D\wedge T]
\le
\frac{2r(r-1)^2+4C_\alpha\varepsilon^2}
     {c_\alpha\varepsilon^2}
D^{1-\beta}.
\]
Finally, we pass $T\to\infty$ and  use the monotone convergence theorem to prove \eqref{eq:little-o-exit}. In particular,
$\theta_D<\infty$ almost surely.

\noindent\textbf{Step 6/7:}  Set
\[
    \gamma:=1-\beta=2-2\alpha\in(0,1).
\]
Fix \(0<R<1/r\) and define the strictly decreasing sequence
\[
    x_k:=Rr^{-k},
    \qquad k=0,1,2,...
\]
We start the two solutions from
\[
    x:=-\frac{x_1}{2} =- \frac R{2r},
    \qquad
    y:=\frac{x_1}{2}=\frac R{2r},
\]
so that \(M_0=0\) and \(D_0=x_1 \in (x_2,x_0)\). Before giving the details, let us explain in words what our construction does. We run $D_t$ from $D_0=x_1$ at time 0 till either $x_2$ or $x_0$ is reached at $T_1$ (we will prove $T_1$ is finite). If $x_0=R$ is hit, we set $K_1:=0$ and otherwise $K_1:=2$. Because $R$ is the max value, we make it absorbing in that $K_j(\omega) = 0$ implies $K_{j+n}(\omega) =0$ for all $n\in \N$. Say that $x_2$ is hit at time $T_1$, then we run $D_t$ from $x_2$ at time $T_1$ till it exits $(x_3,x_1)$ at time $T_2$, and so forth. Here are the specifics: We set \(T_0:=0\) and \(K_0:=1\). We construct inductively \(T_j\) (nondecreasing stopping times) and \(K_j\) (nonmonotone random variables valued in $\mathbb N_0$). Suppose that \(T_j\) is an almost surely finite stopping
time, \(K_j\) is \(\mathcal F_{T_j}\)-measurable, and
\[
    D_{T_j}=x_{K_j}
    \qquad\text{on }\{K_j\ge1\}.
\]
We proceed by induction and define \(T_{j+1}:\Omega\to[0,\infty]\) by
\(T_{j+1}:=T_j\) on \(\{K_j=0\}\) and
\(T_{j+1}:=S_{j,k}\) on \(\{K_j=k\}\), \(k\in\N\), where
\[
    S_{j,k}
    :=
    \inf\left\{
        t\ge T_j:
        D_t\notin(x_{k+1},x_{k-1})
    \right\},
    \qquad
    \inf\varnothing:=\infty.
\]
Since \(D\) is continuous and adapted, \(S_{j,k}\) is a stopping time. To see that  \(T_{j+1}\) is also a stopping time,  we use 
\(\{K_j=k\}\in\mathcal F_{T_j}\), and that $T_j$ and $S_{j,k}$ are stopping times. These properties and the definition of the stopped filtration $\sF_{T_j}$  give
\[
    \{K_j=0\}\cap\{T_j\le t\}\in\mathcal F_t,\quad     \{K_j=k\}\cap\{S_{j,k}\le t\}\in\mathcal F_t,\quad t\ge0.
\]
Therefore, we get that also  \(T_{j+1}\) is a stopping time:
\[
\begin{aligned}
    \{T_{j+1}\le t\}
    ={}&
    \bigl(\{K_j=0\}\cap\{T_j\le t\}\bigr)\\
    &{}\cup
    \bigcup_{k\ge1}
    \bigl(\{K_j=k\}\cap\{S_{j,k}\le t\}\bigr)
    \in\mathcal F_t.
\end{aligned}
\]

On \(\{K_j=k\}\) for $k\in \N$, continuity of \(D\) and
\(D_{T_j}=x_{K_j} = x_k\in(x_{k+1},x_{k-1})\) give 
\[
    T_{j+1}
    =
    \inf\left\{
        t>T_j:
        D_t\in\{x_{k-1},x_{k+1}\}
    \right\}.
\]
Define
\begin{align}\label{K_j_def}
    K_{j+1}
    :=
    \begin{cases}
        0,&K_j=0,\\
        k-1,&K_j=k\ge1,\ T_{j+1}<\infty,\
              D_{T_{j+1}}=x_{k-1},\\[1mm]
        k+1,&K_j=k\ge1,\ T_{j+1}<\infty,\
              D_{T_{j+1}}=x_{k+1},\\[1mm]
        0,&T_{j+1}=\infty.
    \end{cases}
\end{align}
Then \(K_{j+1}\) is \(\mathcal F_{T_{j+1}}\)-measurable. By continuity of \(D\), we have
\[
    D_{T_{j+1}}=x_{K_{j+1}}\quad \text{on}\quad \{K_{j+1}\ge1\}.
\]
To see that \(T_{j+1}<\infty\) almost surely, we fix \(k\ge1\).
We have
\[
    \left(\frac{x_k}{r},rx_k\right)
    =(x_{k+1},x_{k-1})      \quad \text{and}\quad     D_{T_j}=x_{K_j}=x_k
 \quad \text{on}\quad \{K_{j}=k\}.
\]
We let \(\vartheta_t\) denote the shift
operator for $t\ge0$. The stopping time \(\theta_{x_k}\) in \eqref{theta_D} satisfies 
\[
\begin{aligned}
    \vartheta_{T_j}\theta_{x_k}
    &=
    \inf\left\{u\ge0:
        D_{T_j+u}\notin(x_{k+1},x_{k-1})
    \right\}=T_{j+1}-T_j \quad \text{on}\quad \{K_{j}=k\},
\end{aligned}
\]
with both sides understood as $+\infty$ when $T_{j+1}=\infty$. At this point, we do not have integrability of $T_{j+1}$  and so we cap with $n\in\N$. Because capping commutes with $\vartheta_t$, we have
\[
    \vartheta_{T_j}(\theta_{x_k}\wedge n)
    =
    (T_{j+1}-T_j)\wedge n
    \qquad\text{on }\{K_j=k\}.
\]
The strong Markov property (see, e.g.,  Eq., (7.2.5) in \O ksendal, 2003), \(\{K_j=k\}\in\mathcal F_{T_j}\), and \(D_{T_j}=x_k\) on \(\{K_j=k\}\) give 
\[
\begin{aligned}
\mathbf 1_{\{K_j=k\}}
\mathbb E\!\left[
    (T_{j+1}-T_j)\wedge n
    \,\middle|\,\mathcal F_{T_j}
\right]
&=
\mathbf 1_{\{K_j=k\}}
\mathbb E\!\left[
    \vartheta_{T_j}(\theta_{x_k}\wedge n)
    \,\middle|\,\mathcal F_{T_j}
\right]\\
&=
\mathbf 1_{\{K_j=k\}}
\mathbb E_{M_{T_j},D_{T_j}}
    [\theta_{x_k}\wedge n]\\
&=
\mathbf 1_{\{K_j=k\}}
\mathbb E_{M_{T_j},x_k}
    [\theta_{x_k}\wedge n].
\end{aligned}
\]
Since \(x_k\le R<1/r\), the bound in \eqref{eq:little-o-exit} applies. We let  \(n\to\infty\) and
use conditional monotone convergence to see
\[
    \mathbb E[T_{j+1}-T_j\mid\mathcal F_{T_j}]
    \le
    C_{\alpha,\varepsilon}x_k^\gamma
    \qquad\text{on }\{K_j=k\}.
\]
Because $k\in \N$ was arbitrary, this gives the estimate
\begin{equation}\label{eq:little-o-holding}
    \mathbb E[T_{j+1}-T_j\mid\mathcal F_{T_j}]
    \le
    C_{\alpha,\varepsilon}x_{K_j}^\gamma
    \qquad\text{on }\{K_j\ge1\}.
\end{equation}
The right-hand side is finite, so
\(T_{j+1}-T_j<\infty\) almost surely on \(\{K_j\ge1\}\).
On \(\{K_j=0\}\), we have \(T_{j+1}=T_j<\infty\) by definition. All in all, 
\(\P(T_{j+1}<\infty)=1\), which  concludes the induction step.

We define $T_\infty:=\lim_{j\to\infty}T_j$ (nondecreasing limit) and we will prove $\E[T_\infty]<\infty$. We need the transition probabilities. Fix $k\in \N$. The local martingale
\[
    \mathbf 1_{\{K_j=k\}}
    \left(
        D_{(T_j+t)\wedge T_{j+1}}-D_{T_j}
    \right)=   \mathbf 1_{\{K_j=k\}}
    \left(
        D_{(T_j+t)\wedge T_{j+1}}-x_k
    \right),
    \qquad t\ge0,
\]
is uniformly bounded, hence, is a uniformly
integrable martingale and we can apply optional sampling.  Because \(T_{j+1}<\infty\) almost surely,
conditional optional sampling yields
\[
    \mathbf 1_{\{K_j=k\}}
    \mathbb E[D_{T_{j+1}}\mid\mathcal F_{T_j}]
    =
    \mathbf 1_{\{K_j=k\}}D_{T_j}
    =
    \mathbf 1_{\{K_j=k\}}x_k.
\]
On \(\{K_j=k\}\) for $k\in\N$, we have $D_{T_{j+1}}\in \{x_{k+1},x_{k-1}\}$ and so
\[
\begin{aligned}
    x_k
    &=
    x_{k+1}
    \mathbb P(K_{j+1}=k+1\mid\mathcal F_{T_j})
    +
    x_{k-1}
    \mathbb P(K_{j+1}=k-1\mid\mathcal F_{T_j})\quad\text{on}\quad \{K_j=k\}. 
\end{aligned}
\]
Since the two probabilities sum to one and $x_{k+1}=\frac{x_k}{r}$ and $x_{k-1}=rx_k$ we have
\[
    \mathbb P(K_{j+1}=k+1\mid\mathcal F_{T_j})
    =\frac r{r+1}
    \quad\text{and}\quad
    \mathbb P(K_{j+1}=k-1\mid\mathcal F_{T_j})
    =\frac1{r+1}\quad\text{on}\quad \{K_j=k\}. 
\]

For $j \in \N_0$, we define
\[
    Y_j:=
    \begin{cases}
       x_{K_j}^\gamma,&K_j\ge1,\\
       0,&K_j=0.
    \end{cases}
\]
Because $K_0=1$, we have $Y_0 = x^\gamma_1$. For \(k\ge2\), the transition probabilities give
\[
\begin{aligned}
\mathbb E[Y_{j+1}\mid\mathcal F_{T_j}]
&=
\frac r{r+1}x_{k+1}^\gamma
+\frac1{r+1}x_{k-1}^\gamma=
\frac{r^{1-\gamma}+r^\gamma}{r+1}x_k^\gamma
\qquad\text{on }\{K_j=k\}.
\end{aligned}
\]
For \(k=1\), however, the transition \(K_{j+1}=k-1=0\) gives
\(Y_{j+1}=0\). Therefore
\[
\begin{aligned}
\mathbb E[Y_{j+1}\mid\mathcal F_{T_j}]
&=
\frac r{r+1}x_2^\gamma\\
&\le
\frac r{r+1}x_2^\gamma
+\frac1{r+1}x_0^\gamma\\
&=
\frac{r^{1-\gamma}+r^\gamma}{r+1}x_1^\gamma
\qquad\text{on }\{K_j=1\}.
\end{aligned}
\]
For $k=0$, we have \(K_{j+1}=0\) and so 
\(Y_{j+1}=Y_j=0\) on \(\{K_j=0\}\).
Because  $\gamma\in (0,1)$, the strict
concavity of  the two functions \(u\mapsto u^\gamma, u^{1-\gamma}\)  gives
\[
    r^\gamma<1+\gamma(r-1),
    \qquad
    r^{1-\gamma}<1+(1-\gamma)(r-1),\quad    m_\gamma:=\frac{r^{1-\gamma}+r^\gamma}{r+1}<1.
\]
Combining the three cases $k=0$, $k=1$, and $k\ge2$ produces
\[
    \mathbb E[Y_{j+1}\mid\mathcal F_{T_j}] \le m_\gamma \sum_{k=1}^\infty 1_{\{K_j=k\}} x_k^\gamma 
    = m_\gamma Y_j
    \qquad\text{a.s.}
\]
This implies $\E[Y_{j}] \le m_\gamma \E[Y_{j-1}] \le .... \le m^{j}_\gamma Y_0= m^{j}_\gamma x_1^\gamma$. 
Since $T_0=0$, we have $T_\infty=\sum_{j=0}^\infty (T_{j+1}-T_j)$. Furthermore, because $T_j$ is nondecreasing, Tonelli's theorem and \eqref{eq:little-o-holding} give
\[
\begin{aligned}
 \mathbb E[T_\infty]
 &=\sum_{j=0}^\infty\mathbb E[T_{j+1}-T_j]\le C_{\alpha,\varepsilon}
     \sum_{j=0}^\infty\mathbb E[Y_j]\le 
     C_{\alpha,\varepsilon}x_1^\gamma
     \sum_{j=0}^\infty m^{j}_\gamma   =  \frac{C_{\alpha,\varepsilon}x_1^\gamma}
         {1-m_\gamma}
 <\infty,
\end{aligned}
\]
because $m_\gamma \in (0,1)$. In particular, $\P(T_\infty<\infty)=1$.

\noindent\textbf{Step 7/7:} On $  \mathcal A
    :=
    \{K_j\in \N \text{ for all }j\ge0\}$, we have   \(K_{j+1}=K_j\pm1\). To rule out \(D_{T_\infty}=\ell>0\) on $  \mathcal A$, consider 
   $ D_{T_j}\ge\frac{\ell}{2}>0$ for all sufficiently large \(j\). On $\{K_j=k\}$ for some $k\in \N$, we have 
   $$
   D_{T_{j+1}} =x_{k+1} \Rightarrow |D_{T_{j+1}} -D_{T_j}| = (1-\frac1r)x_k,\quad    D_{T_{j+1}} =x_{k-1} \Rightarrow |D_{T_{j+1}} -D_{T_j}| = (r-1)x_k.
   $$
Because $(r-1) - (1-\frac1r) = \frac{(r-1)^2}{r}>0$, this gives
\[
\begin{aligned}
    |D_{T_{j+1}}-D_{T_j}|
    &\ge
    \left(1-\frac1r\right)D_{T_j}\ge
    \left(1-\frac1r\right)\frac{\ell}{2}>0,
\end{aligned}
\]
contradicting  $\lim_j D_{T_j}=\ell>0$. So, on $\sA$, path continuity gives $ D_{T_\infty}=0$.

Define
\[
    \tau_R:=\inf\{t\ge0:D_t\in\{0,R\}\}.
\]
To see $\P(\tau_R<\infty)=1$, we let $\omega\in \mathcal A^c$. Since $K_0=1$ and, before absorption, $K_{j+1}=K_j\pm1$, the first
transition to the absorbing state $0$ must be from state $1$. Then,  there is some finite integer \(j\) such that $K_{j+1}(\omega)=0$ and $K_{j}(\omega)=1$. The second line in \eqref{K_j_def} gives
 $D_{T_{j+1}}(\omega) = x_0 = R$. Alternatively, for \(\omega\in \mathcal A\), we just proved $T_{\infty}(\omega)<\infty$ and $D_{T_\infty}(\omega)=0$. Thus, $\P(\tau_R<\infty)=1$ follows.

 The stopped local martingale \(D_{t\wedge\tau_R}\) takes values in
\([0,R]\) and is therefore a uniformly integrable martingale.
Optional sampling gives
\[
\frac{R}{r}=    x_1
    =
    D_0
    =
    \mathbb E[D_{\tau_R}]
    =
    R\,\mathbb P(D_{\tau_R}=R).
\]
Up to a nullset,  we proved above
\[
    \mathcal A\subseteq\{D_{\tau_R}=0\},
    \qquad
    \mathcal A^c\subseteq\{D_{\tau_R}=R\}.
\]
We can take complements to see $
    \mathcal A=\{D_{\tau_R}=0\}$ up to a nullset 
because
$$
\   \mathcal A\supseteq\{D_{\tau_R}=R\}^c=\{D_{\tau_R}=0\}.
$$
Because probabilities must add to one, this implies
\begin{equation}\label{eq:little-o-collision-probability}
    \mathbb P(\mathcal A)
    =
    \mathbb P(D_{\tau_R}=0)
    =
    1-\frac{x_1}{R}
    =
    1-\frac1r
    >0.
\end{equation}
On $\mathcal A=\{D_{\tau_R}=0\}$, we have
\[
    X_{T_\infty}^{-\frac{x_1}2}
    =
    X_{T_\infty}^{\frac{x_1}2},
\]
and pathwise uniqueness implies that the two solutions remain together
thereafter. Consequently,
\[
    \mathbb P\!\left(
        X_t^{-\frac{x_1}2}=X_t^{\frac{x_1}2}
        \text{ for some }t<\infty
    \right)
    \ge
    1-\frac1r
    >0.
\]

\end{proof}

\appendix

\section{Examples related to Yamada (1986)}

In our setting with positive continuous coefficients satisfying
Assumption~\ref{ass1}, with strict comparison considered up to
the minimum of the lifetimes, this appendix illustrates
\[
    W^{1,2}_{\mathrm{loc}}
    \subsetneq
    \{\text{Yamada (1986)}\}  \subsetneq  \{\text{strict comparison holds}\}.
\]
Appendix \ref{outsideW1p} gives the first strict inclusion and  Appendix \ref{Yamada_fails} gives the second strict inclusion.

\subsection{A continuous coefficient outside $W^{1,2}_{\mathrm{loc}}$ for which strict comparison holds}\label{outsideW1p}

This  example shows the
coefficients covered by \eqref{Yamada1986} strictly contain \(W^{1,2}_{\mathrm{loc}}\). We consider the $2\pi$-periodic coefficient
\begin{equation}\label{eq:example-lacunary}
    \sigma_{\mathrm{lac}}(x)
    :=
    2+\sum_{n=1}^{\infty}
      \frac{2^{-n}}{\sqrt n}\cos(2^n x),\quad x\in \R.
\end{equation}
The series converges uniformly and
$1<\sigma_{\mathrm{lac}}<3$.  Item~7 in V.5.7 in Zygmund (1959) ensures that, for a lacunary
trigonometric series
\[
\sum_{k=1}^\infty\big(a_k\cos(n_kx)+b_k\sin(n_kx)\big),
\qquad
\inf_{k\in \N}\frac{n_{k+1}}{n_k}>1,
\]
differentiability on a set of positive measure implies
$
\sum_k n_k^2(a_k^2+b_k^2)<\infty.
$
For \eqref{eq:example-lacunary}, we have \(n_k:=2^k\), \(a_k:=2^{-k}/\sqrt{k}\), and \(b_k:=0\), which give
\[
    \sum_{k=1}^\infty n_k^2a_k^2
    =
    \sum_{k=1}^\infty\frac1k
    =
    \infty.
\]
Therefore, \(\sigma_{\mathrm{lac}}\) is nondifferentiable almost everywhere and so $\sigma_{\mathrm{lac}}\notin W^{1,1}_{\mathrm{loc}}(\mathbb R)$. We set $h:=|x-y|\le\frac12$ and choose $N$ so
that $2^{-(N+1)}<h\le2^{-N}$.  Then, we have
\[
\begin{aligned}
|\sigma_{\mathrm{lac}}(x)-\sigma_{\mathrm{lac}}(y)|
&\le
h\sum_{n\le N}\frac1{\sqrt n}
+2\sum_{n>N}\frac{2^{-n}}{\sqrt n}\le
C h\sqrt{\log\frac eh}.
\end{aligned}
\]
Therefore, on a neighborhood of zero, we can use $    \rho(h):=C h\sqrt{\log\frac eh}$ as modulus. 
Consequently, $\sigma_{\mathrm{lac}} \in C^\frac12(\R)$ and even $\sigma_{\mathrm{lac}} \in C^\beta(\R)$ for all $\beta \in (0,1)$. Inserting $\rho$ into \eqref{YOcond1} gives 
\[
\begin{aligned}
    \int_{0+}\frac{h\,\dd h}{\rho(h)^2}
    &=
    \frac1{C^2}
    \int_{0+}
    \frac{\dd h}{h\log(e/h)}
    =
    \infty,
\end{aligned}
\]
and strict comparison follows.

\subsection{A continuous coefficient outside Yamada (1986) for which strict comparison holds}\label{Yamada_fails}
The coefficients in \eqref{ex_main} are not covered by \eqref{Yamada1986}. To see this, we consider $\sigma_{\text{sym}}$ in  \eqref{ex_main} and assume to the contrary that, for some small $x_0>0$ we have
\[
    1+\sqrt{|x|}
    =
    \sigma_1(t,x)\sigma_2(x),\quad |x|< x_0,
\]
satisfies \eqref{Yamada1986}.  Fix \(t\). The modulus hypothesis makes
\(x\mapsto\sigma_1(t,x)\) continuous, while \(\sigma_2\) is assumed continuous
(even absolutely continuous). Since
\[
    \sigma_1(t,0)\sigma_2(0)=\sigma(0)=1,
\]
both factors are nonzero at zero. Hence, after reducing $x_0$ if necessary, there are constants \(0<C<M<\infty\) such
that
\[
    C\le
    |\sigma_1(t,x)|,\ |\sigma_2(x)|
    \le M.
\]
Since $\sigma_2'\in L^2_{\mathrm{loc}}$, Cauchy--Schwarz's inequality gives
\[
\begin{aligned}
    |\sigma_2(h)-\sigma_2(0)|
    &\le
    \sqrt h
    \left(\int_0^h|\sigma_2'(u)|^2\,\dd u\right)^{\frac12}=o(\sqrt h),
    \qquad h\downarrow0.
\end{aligned}
\]
On the other hand,
\[
\begin{aligned}
    \sqrt h
    &=
    \sigma(h)-\sigma(0)=
    \sigma_2(h)\bigl(\sigma_1(t,h)-\sigma_1(t,0)\bigr)
    +
    \sigma_1(t,0)\bigl(\sigma_2(h)-\sigma_2(0)\bigr).
\end{aligned}
\]
Hence, for all sufficiently small $h>0$, we have
\[
    |\sigma_1(t,h)-\sigma_1(t,0)|
    \ge c\sqrt h
\]
for some $c>0$.  Therefore, any modulus $\rho$ satisfying
\[
    |\sigma_1(t,x)-\sigma_1(t,y)|
    \le \rho(|x-y|)
\]
must satisfy $\rho(h)\ge c\sqrt h$ for small $h$.  Consequently, 
\[
    \int_{0+}\frac{h\,dh}{\rho(h)^2}
    \le
    \frac1{c^2}\int_{0+}dh
    <\infty,
\]
and \eqref{YOcond1} cannot hold. The same argument applies to  $\sigma_+$.

\section{The exponential distribution of \(L_{\tau_W}^0(W)\)}\label{Bass}

Fix \(a>0\) and define
\[
    u_a(x):=\frac{1+a|x|}{1+a},
    \qquad |x|\le1.
\]
By the It\^o--Tanaka formula,
\[
    \dd u_a(W_t)
    =
    \frac{a}{1+a}\operatorname{sgn}(W_t)\,\dd W_t
    +
    \frac{a}{1+a}\,\dd L_t^0(W),\quad t<\tau_W.
\]
Consequently, integration by parts gives
\[
\begin{aligned}
    \dd\!\left(
        e^{-aL_t^0(W)}u_a(W_t)
    \right)
    &=
    e^{-aL_t^0(W)}
    \frac{a}{1+a}\operatorname{sgn}(W_t)\,\dd W_t
    \\
    &\quad+
    e^{-aL_t^0(W)}
    \left(
        \frac{a}{1+a}
        -
        a u_a(W_t)
    \right)\dd L_t^0(W),\quad t<\tau_W.
\end{aligned}
\]
Since \(\dd L_t^0(W)\) is supported on the set
\(\{t\ge0:W_t=0\}\), and
\[
    u_a(0)=\frac1{1+a},\quad a>0,
\]
the finite-variation term vanishes. Hence
\[
    e^{-aL_{t\wedge\tau_W}^0(W)}
    u_a(W_{t\wedge\tau_W})
\]
is a bounded martingale valued between 0 and 1. Optional Stopping and the Dominated Convergence Theorems produce
\[
    u_a(0)
    =
    \E\!\left[
        e^{-aL_{\tau_W}^0(W)}u_a(W_{\tau_W})
    \right].
\]
Because \(|W_{\tau_W}|=1\) and \(u_a(\pm1)=1\), we obtain the exponential Laplace transform:
\[
    \E\!\left[e^{-aL_{\tau_W}^0(W)}\right]
    =
    \frac1{1+a},
    \qquad a>0.
\]

\newpage
\noindent Declaration of interest: Kasper Larsen has no conflicts of interest.  \ \\

\noindent Declaration of generative AI and AI-assisted technologies in the manuscript preparation process: The author used ChatGPT to obtain feedback on exposition and to check mathematical arguments for correctness. Any and all content has been manually verified with pen and paper by the author. The author  assumes full responsibility for all content.\ \\

\noindent Data availability statement: We do not analyze nor generate any datasets.

\end{document}